\documentclass[11pt]{amsart}

\usepackage{amsmath}
\usepackage{amssymb}
\usepackage{amsthm}
\usepackage[hidelinks]{hyperref}
\usepackage{geometry}

\calclayout

\numberwithin{equation}{section}

\newtheorem{theorem}{Theorem}[section]
\newtheorem{proposition}[theorem]{Proposition}
\newtheorem{lemma}[theorem]{Lemma}
\newtheorem{corollary}[theorem]{Corollary}

\theoremstyle{definition}
\newtheorem{definition}[theorem]{Definition}

\theoremstyle{remark}

\newtheorem{notation}[theorem]{Notation}

\newcommand{\tref}[2][]
	{\ifx#1""{Theorem~\textup{\ref{#2}}}%
	\else{Theorems~\textup{\ref{#1}} and~\textup{\ref{#2}}}\fi}
\newcommand{\trefs}[4][]
	{\ifx#1""{Theorems~\textup{\ref{#2}},~\textup{\ref{#3}} and~\textup{\ref{#4}}}%
	\else{Theorems~\textup{\ref{#1}},~\textup{\ref{#2}},~\textup{\ref{#3}} and~\textup{\ref{#4}}}\fi}
\newcommand{\pref}[2][]
	{\ifx#1""{Proposition~\textup{\ref{#2}}}%
	\else{Propositions~\textup{\ref{#1}} and~\textup{\ref{#2}}}\fi}
\newcommand{\prefs}[4][]
	{\ifx#1""{Propositions~\textup{\ref{#2}},~\textup{\ref{#3}} and~\textup{\ref{#4}}}%
	\else{Propositions~\textup{\ref{#1}},~\textup{\ref{#2}},~\textup{\ref{#3}} and~\textup{\ref{#4}}}\fi}
\newcommand{\lref}[2][]
	{\ifx#1""{Lemma~\textup{\ref{#2}}}%
	\else{Lemmas~\textup{\ref{#1}} and~\textup{\ref{#2}}}\fi}
\newcommand{\lrefs}[4][]
	{\ifx#1""{Lemma~\textup{\ref{#2}}, \textup{\ref{#3}} and~\textup{\ref{#4}}}%
	\else{Lemmas~\textup{\ref{#1}}, \textup{\ref{#2}}, \textup{\ref{#3}} and \textup{\ref{#4}}}\fi}
\newcommand{\corref}[2][]
	{\ifx#1""{Corollary~\textup{\ref{#2}}}%
	\else{Corollaries~\textup{\ref{#1}} and~\textup{\ref{#2}}}\fi}
\newcommand{\rref}[2][]
	{\ifx#1""{Remark~\textup{\ref{#2}}}%
	\else{Remark~\textup{\ref{#1}} and~\textup{\ref{#2}}}\fi}
\newcommand{\dref}[2][]
	{\ifx#1""{Definition~\textup{\ref{#2}}}%
	\else{Definition~\textup{\ref{#1}} and~\textup{\ref{#2}}}\fi}
\newcommand{\iref}[2][]
	{\ifx#1""{\textup{(\ref{#2})}}%
	\else{\textup{(\ref{#1})}~and~\textup{(\ref{#2})}}\fi}
\newcommand{\aref}[2][]
	{\ifx#1""{Assumption~\textup{\ref{#2}}}%
	\else{Assumption~\textup{\ref{#1}} and~\textup{\ref{#2}}}\fi}
\newcommand{\noteref}[2][]
	{\ifx#1""{Notation~\textup{\ref{#2}}}%
	\else{Notation~\textup{\ref{#1}} and~\textup{\ref{#2}}}\fi}
\newcommand{\secref}[2][]
	{\ifx#1""{Section~\textup{\ref{#2}}}%
	\else{Section~\textup{\ref{#1}} and~\textup{\ref{#2}}}\fi}

\newcommand{\CmplNum}{\mathbf{C}}
\newcommand{\RealNum}{\mathbf{R}}
\newcommand{\NaturalNum}{\mathbf{N}}
\newcommand{\Integers}{\mathbf{Z}}

\newcommand{\ImUnit}{\sqrt{-1}}

\newcommand{\expect}{\boldsymbol{E}}

\newcommand{\Hurst}{H}

\newcommand{\SchwartzClass}{\mathcal{S}}

\newcommand{\RP}[1]{\boldsymbol{#1}}
\newcommand{\var}[1]{{#1}\mathchar`-{\mathrm{var}}}

\newcommand{\floor}[1]{\lfloor{#1}\rfloor}

\newcommand{\indicator}{\mathtt{1}}

\newcommand{\lf}{\mathrm{lf}}
\newcommand{\hf}{\mathrm{hf}}

\begin{document}

\title[Rough path lift of mBms]{Rough path lifts of multifractional Brownian motions}

\author[N. Naganuma]{Nobuaki Naganuma}
\address[N. N.]{Faculty of Advanced Science and Technology, Kumamoto University.
2-39-1 Kurokami, Chuo-ku, Kumamoto, 860-8555, JAPAN}
\email{naganuma@kumamoto-u.ac.jp}
\thanks{\emph{Acknowledgment}. This work was supported by JSPS KAKENHI Grant Number 22K13932.}

\begin{abstract}
	We show that multifractional Brownian motions (mBms) can be lifted to geometric rough paths.
	One of the main ingredients in the proof is a decomposition of mBms
	via the Lemari{\'e}-Meyer orthogonal wavelet basis, which is introduced in the context of multiresolution analysis.
	As a consequence, we are able to construct
	solutions to rough differential equations driven by mBms.
\end{abstract}

\subjclass[2020]{Primary: 60L20, Secondary: 60G15, 42C40}
\keywords{rough path theory; multifractional Brownian motion; fractional Brownian motion; wavelet analysis}
\date{October 23, 2026}

\maketitle

\tableofcontents

\section{Introduction}
We begin by recalling fractional Brownian motion (fBm).
A one-dimensional fBm $B=\{B(t)\}_{t\in\RealNum}$
is a real-valued centered Gaussian process
with continuous sample paths starting from the origin
and with covariance satisfying
\begin{align*}
	\expect[B(s)B(t)]
	=
		\frac{c_\Hurst^2}{2}
		\{
			|s|^{2\Hurst}
			+
			|t|^{2\Hurst}
			-
			|s-t|^{2\Hurst}
		\}
	\qquad
	\text{for}
	\quad s,t\in\RealNum.
\end{align*}
Here $0<\Hurst<1$ is a constant parameter, called the Hurst parameter,
and $c_\Hurst$ is a positive constant satisfying
\begin{align}\label{eq4892084098324}
	c_\Hurst^2
	=
		\frac{1}{\Gamma(2\Hurst+1)\sin\pi \Hurst}.
\end{align}
While many readers may be familiar with $c_\Hurst=1$,
we use this normalization for consistency with 
the definition of multifractional Brownian motion (mBm),
which is the main subject of the present work.
A $d$-dimensional fBm is defined as a $d$-dimensional stochastic process
whose components are independent one-dimensional fBms.
In this paper, the Hurst parameters of the components may differ;
however, in this introduction we assume them to have the same value $\Hurst$ for simplicity.
From this definition, we see 
that an fBm with $\Hurst=\frac{1}{2}$ is a standard Brownian motion.
It is well-known that fBm has (global) self-similarity and stationary increments.
Furthermore, sample paths of fBms are $(\Hurst-\delta)$-H{\"o}lder continuous,
where $0<\delta<\Hurst$. In particular, if $\Hurst\neq\frac{1}{2}$ then an fBm is neither a semimartingale nor a Markov process.
Due to these remarkable properties,
fBm has been extensively studied 
(see \cite{BiaginiHuOksendalZhang2008Book, Mishura2008Book, Nourdin2012Book,  PipirasTaqqu2017Book}).

Next we recall Young integration theory (\cite{YoungLC1936})
and rough path analysis (\cite{Lyons1998}).
The reader is referred to \cite{FrizHairer2020Book, FrizVictoir2010Book, LyonsQian2002Book}.
Both theories are concerned with controlled ordinary differential equations (ODEs),
which are deterministic rather than probabilistic.
The choice between these two theories depends on
the roughness of the driving functions of controlled ODEs.
In Young integration theory, we can handle driving functions
with finite $p$-variation as drivers for $1<p<2$.
In rough path analysis, we can also handle driving functions
with finite $p$-variation as drivers for $p\geq 2$,
but in this case, we need to construct a (geometric) rough path associated with the driver.
Here, a rough path is a collection consisting of increments of the driver
and objects analogous to iterated integrals of the driver,
and a geometric rough path is a rough path obtained as the limit of a sequence of iterated integrals.
However, since the driver is not smooth,
we cannot define the iterated integrals in the usual way.
In what follows, we sometimes refer to constructing a rough path
associated with a function as lifting the function.

As an application of Young integration theory and rough path analysis, 
we can consider stochastic differential equations (SDEs) driven by fBms.
When $\Hurst=\frac{1}{2}$, an fBm is a standard Brownian motion
and we usually use the It{\^o} theory to formulate the notion of a solution to the SDE.
When $\Hurst\neq\frac{1}{2}$, fBms are not semimartingales so the It{\^o} theory is unavailable.
Thus, Young integration theory is typically applied in the case $\frac{1}{2}<\Hurst<1$,
while rough path analysis is used in the case $0<\Hurst\leq\frac{1}{2}$.
In the latter case, as stated above,
we construct geometric rough paths associated with fBm in a probabilistic way;
in other words, we lift the fBms to geometric rough paths.

Here we focus on how to construct rough paths
associated with Gaussian processes including fBm.
Some sufficient conditions for constructing rough paths
associated with Gaussian processes are known.
If Gaussian processes satisfy one of the sufficient conditions,
we can construct geometric rough paths associated with the processes.
The first condition is known as the Coutin-Qian condition (\cite{CoutinQian2002}).
The authors constructed geometric rough paths associated
with fBms by establishing the sufficient condition.
We precisely state this condition in \dref{def489238430212423} and use it to show our main results.
The second condition is formulated in terms of two-dimensional $\rho$-variation
of the covariance functions (\cite{FrizVictoir2010b}). 
Here $1\leq \rho<2$ and we typically choose $\rho=\frac{1}{2\Hurst}$
when we consider fBms.
This is a generalization of the Coutin-Qian condition.
A standard approach to verifying these conditions relies on explicit expressions for the covariance function.
The third condition is stated in terms of
covariance measure structures (\cite{FrizGessGulisashviliRiedel2016}).
This condition allows us to handle Gaussian processes
without requiring an explicit expression for the covariance function.
In particular, the authors of \cite{FrizGessGulisashviliRiedel2016} studied random Fourier series.
Note that a special case of the third condition is found
in \cite[Section~10.3]{FrizHairer2020Book}
as the ``concavity criterion.''
We should note that fBms can be lifted to geometric rough paths
only in the case $\frac{1}{4}<\Hurst\leq\frac{1}{2}$
by using the sufficient conditions stated above.
For $0<\Hurst<\frac{1}{4}$, although one can construct
a (nongeometric) rough path associated with fBms,
this is not the focus of the present work.

We are now in a position to introduce mBms.
In the present paper, we frequently refer to \cite{Ayache2019Book} for their details.
Historically, Peltier and L{\'e}vy V{\'e}hel (\cite{PeltierLevyVehel1995})
introduced moving-average type mBms as extensions of fBms, which are indexed by $\RealNum$,
while  Benassi, Jaffard, and Roux (\cite{BenassiJaffardRoux1997})
introduced real-harmonisable type multifractional Brownian fields indexed by $\RealNum^d$ ($d\ge 1$).
Hereafter we consider only real-harmonisable type mBms, which are indexed by $\RealNum$.
Here we roughly state the definition of mBms.
Let $h\colon\RealNum\to (0,1)$ be a $C^1$-function.
We define mBms $B=\{B(t)\}_{t\in\RealNum}$ by replacing
the Hurst parameter $\Hurst$ in an expression of fBm by the function $h$.
Of course, if $h$ is a constant function, then mBms are fBms.
Furthermore, the covariance function is given by
\begin{align*}
	\expect[B(s)B(t)]
	=
		\frac{c_{\frac{h(s)+h(t)}{2}}^2}{2}
		\{
			|s|^{h(s)+h(t)}
			+
			|t|^{h(s)+h(t)}
			-
			|s-t|^{h(s)+h(t)}
		\}
	\qquad
	\text{for}
	\quad s,t\in\RealNum.
\end{align*}
Here $c_{\frac{h(s)+h(t)}{2}}$ is a function of $s$ and $t$ defined by
replacing $\Hurst$ with $\frac{h(s)+h(t)}{2}$ in $c_\Hurst$,
where $c_\Hurst$ is a positive constant satisfying \eqref{eq4892084098324}.
From this expression, we see that, in general, 
mBms admit neither (global) self-similarity nor stationary increments.
The absence of these properties makes mBms more difficult to study.

Next, we explain our main results, which involve constructing a rough path lift of mBms.
In the preceding results, rough path lifts are based
on expressions of the covariance functions or covariance measure structures.
It seems difficult to show that the Coutin-Qian condition is applicable based
on the expression above, and it is also unclear 
whether the covariance measure structures can be analyzed.
Thus, in the present paper, we construct rough paths
associated with mBms in another way, as explained below.
We divide mBms into irregular and regular stochastic processes.
In this decomposition, we will use
the Lemari{\'e}-Meyer orthogonal wavelet basis of $L^2(\RealNum)$.
This basis is typically developed in the context of multiresolution analysis.
After the decomposition, we will show that the irregular stochastic process
satisfies the Coutin-Qian condition.
Finally, we complete the construction of a rough path
via translation in the direction of the regular stochastic process.
One advantage of this proof is that we can construct rough paths
without relying on the explicit form of the covariance functions.
Another advantage is that the tools used are standard.

We summarize the previous results and discuss directions for future work.
In \cite{Harang2020}, Harang studied rough differential equations driven by mBms. 
He introduced a variant of the space of rough paths and solved the equations.
In \cite{BianchiPantanellaPianese2012, BianchiPantanellaPianese2013, BianchiPianese2014},
the authors apply mBms to problems in mathematical finance.
A possible direction for future work is to study SDEs motivated by mathematical finance.

The organization of the present paper is as follows.
In \secref{SecFrameworkResults}, we introduce
the framework we will work in and state our main results.
\secref{SecAnalysisOfGenerator} is devoted to
the analysis of the generator of mBms 
and the proofs of our main results.
In Appendix~\ref{SecAppendix}, we collect technical lemmas.

At the end of this introduction, we present the frequently used notation.
\begin{notation}\label{notation100}
\begin{itemize}
	\item	$\floor{x}$ stands for the integer part of $x\geq 0$.
	\item	$\mathcal{B}_0(\RealNum)$ denotes 
			the collection of Borel subsets of $\RealNum$
			with finite Lebesgue measure.
	\item	$\SchwartzClass(\RealNum)$ denotes the Schwartz space on $\RealNum$.
	\item	For every $f\in L^1(\RealNum)$, the Fourier transform and its inverse are defined by
			\begin{align*}
				\hat{f}(\xi)
				&=
					\frac{1}{\sqrt{2\pi}}
					\int_\RealNum
						e^{-\ImUnit x\xi}
						f(x)\,
						dx,
				&
				\check{f}(\xi)
				&=
					\frac{1}{\sqrt{2\pi}}
					\int_\RealNum
						e^{\ImUnit x\xi}
						f(x)\,
						dx.
			\end{align*}
	\item	We denote by $c$ a constant whose value is not important and may change from line to line.
\end{itemize}
\end{notation}

\section{Framework and main results}\label{SecFrameworkResults}
In this section, we present the framework we will work in and state our main results.
\subsection{Multifractional Brownian motions}
By following \cite{Ayache2019Book}, we introduce an mBm,
which is denoted by $B=\{B(t)\}_{t\in\RealNum}$.
This process is constructed from
the generator of mBms $X=\{X(u,v)\}_{(u,v)\in\RealNum\times(0,1)}$
and a deterministic continuous function $h=\{h(t)\}_{t\in \RealNum}$
and is defined by $B(t)=X(t,h(t))$.
For each fixed $v\in(0,1)$, $X(\cdot,v)$ is an fBm with Hurst parameter $v$, 
and $h$ is a function, called the Hurst function.

We first give a precise definition of the generator of mBm.
Let $\mathbb{W}=\{\mathbb{W}(A)\}_{A\in\mathcal{B}_0(\RealNum)}$ be a Brownian measure,
that is, a real-valued centered Gaussian orthogonally scattered random measure 
on $\RealNum$ with Lebesgue control measure.
Let $\hat{\mathbb{W}}$ be an orthogonally scattered random measure,
which is defined as the Wiener integral
\begin{align*}
	\hat{\mathbb{W}}(A)
	=
		\int_\RealNum
			\hat{\indicator}_A(s)\,
			d\mathbb{W}(s)
	\qquad
	\text{for}
	\qquad
	A\in\mathcal{B}_0(\RealNum).
\end{align*}
Here $\indicator_A$ denotes the indicator function of $A$.
For more details, see \cite[Definition~1.26]{Ayache2019Book}.
Define $K\colon\RealNum\times(0,1)\times(\RealNum\setminus\{0\})\to\CmplNum$ by
\begin{align*}
	K(u,v,\xi)
	=
		\frac{1}{\sqrt{2\pi}}
		\frac{e^{\sqrt{-1}u\xi}-1}{|\xi|^{v+1/2}}.
\end{align*}
Define $X=\{X(u,v)\}_{(u,v)\in\RealNum\times(0,1)}$ by
\begin{align*}
	X(u,v)
	=
		\int_\RealNum
			K(u,v,\xi)\,
			d\hat{\mathbb{W}}(\xi).
\end{align*}
The generator of mBm is defined as
a continuous modification of $X$ and denoted by the same notation $X$
(see \cite[Proposition~1.80 and Remark 1.81]{Ayache2019Book}).
Note
\begin{align*}
	\expect[|X(1,v)|^2]
	=
		\int_\RealNum
			|K(1,v,\xi)|^2\,
			d\xi
	=
		\frac{1}{2\pi}
		\int_\RealNum
			\frac{|e^{\sqrt{-1}\xi}-1|^2}{|\xi|^{2v+1}}\,
			d\xi
	=
		\frac{1}{\Gamma (2 v + 1)\sin \pi v}
	=
		c_v^2.
\end{align*}
We also have
\begin{multline*}
	2\expect[X(u,v)X(u',v')]\\
	\begin{aligned}
	&=
		\int_\RealNum
			K(u,v,\xi)
			\overline{K(u',v',\xi)}\,
			d\xi
		+
		\int_\RealNum
			\overline{K(u,v,\xi)}
			K(u',v',\xi)\,
			d\xi\\
	&=
		\frac{1}{2\pi}
		\int_\RealNum
			\frac{|e^{\sqrt{-1}u\xi}-1|^2}{|\xi|^{v+v'+1}}\,
			d\xi
		+
		\frac{1}{2\pi}
		\int_\RealNum
			\frac{|e^{\sqrt{-1}u'\xi}-1|^2}{|\xi|^{v+v'+1}}\,
			d\xi
		-
		\frac{1}{2\pi}
		\int_\RealNum
			\frac{|e^{\sqrt{-1}(u-u')\xi}-1|^2}{|\xi|^{v+v'+1}}\,
			d\xi\\
	&=
		c_{\frac{v+v'}{2}}^2
		\big\{
			|u|^{v+v'}
			+|u'|^{v+v'}
			-|u-u'|^{v+v'}
		\big\}.
\end{aligned}
\end{multline*}
Hence, we see
\begin{align*}
	\expect[X(u,v)X(u',v)]
	=
		\frac{c_v^2}{2}
		\left\{
			|u|^{2v}
			+|u'|^{2v}
			-|u-u'|^{2v}
		\right\},
\end{align*}
which implies $X(\bullet,v)$ is an fBm with the Hurst parameter $v$
in the sense of the definition given in the Introduction.
We define an mBm as an extension of an fBm as follows.
\begin{definition}
	Let $h\colon\RealNum\to(0,1)$ be a deterministic $C^1$-function
	and set $B(t)=X(t,h(t))$. Then we call $B=\{B(t)\}_{t\in\RealNum}$
	a multifractional Brownian motion (mBm) and $h=\{h(t)\}_{t\in\RealNum}$ a Hurst function.
	A $d$-dimensional mBm is defined as a $d$-dimensional stochastic process
	whose components are independent one-dimensional mBms.
\end{definition}

Note that the components of a $d$-dimensional mBm
may have different Hurst functions.

Next we consider series expansions of one-dimensional processes $X$ and $B$.
We refer the reader to \cite[Sections~5 and 6]{Ayache2019Book}.
We begin the discussion with wavelet analysis.
Let $\{\psi_{jk}\}_{j,k\in\Integers}$ denote the Lemari{\'e}-Meyer orthogonal wavelet basis of $L^2(\RealNum)$.
Although the basis is typically developed in the context of multiresolution analysis, 
we define it directly without going into the details of that theory.
For more details, see \cite[Section~4]{Ayache2019Book}.
Let $\nu\colon\RealNum\to[0,1]$ be an infinitely differentiable function satisfying
\begin{align*}
	\nu(\xi)
	&=
		\begin{cases}
			0,	&	\xi\leq 0,\\
			1,	&	\xi\geq 1,
		\end{cases}
	&
	\nu(\xi)+\nu(1-\xi)=1,
	\qquad
	\xi\in\RealNum.
\end{align*}
We define a function $\psi\in\SchwartzClass(\RealNum)$ so that
$\hat{\psi}$ is an infinitely differentiable compactly supported function 
vanishing in a neighborhood of $0$ satisfying
\begin{align*}
	\hat{\psi}(\xi)
	=
		\frac{1}{\sqrt{2\pi}}
		e^{\ImUnit\frac{\xi}{2}}
		\begin{cases}
			\sin
				\left[
					\frac{\pi}{2}
					\nu
						\left(
							\frac{3}{2\pi}|\xi|-1
						\right)
				\right],
			&
				\frac{2}{3}\pi\leq |\xi|\leq \frac{4}{3}\pi,\\
			\cos
				\left[
					\frac{\pi}{2}
					\nu
						\left(
							\frac{3}{4\pi}|\xi|-1
						\right)
				\right],
			&
				\frac{4}{3}\pi\leq |\xi|\leq \frac{8}{3}\pi,\\
			0,
			& 
				\text{otherwise}.
		\end{cases}
\end{align*}
By using $\psi$, we set
\begin{align*}
	\psi_{jk}(x)=2^{j/2}\psi(2^j x-k).
\end{align*}

With the basis $\{\psi_{jk}\}_{j,k\in\Integers}$ in hand, 
we introduce a series expansion of $X$.
To this end, we define independent standard Gaussian random variables
$\{\varepsilon_{jk}\}_{j,k\in\Integers}$
and a deterministic function $\Psi$ defined on $\RealNum\times\RealNum$ as follows:
\begin{align*}
	\varepsilon_{jk}
	&=
		\int_\RealNum
			\overline{\hat{\psi}_{jk}(\xi)}\,
			d\hat{\mathbb{W}}(\xi),
	&
	\Psi(y,v)
	&=
		\frac{1}{\sqrt{2\pi}}
		\int_\RealNum
			e^{\sqrt{-1}y\eta}
			\frac{\hat{\psi}(\eta)}{|\eta|^{v+1/2}}\,
			d\eta.
\end{align*}
Then we have
\begin{align*}
	X(u,v)
	=
		\sum_{j,k\in\Integers}
			2^{-jv}
			\varepsilon_{jk}
			\{
				\Psi(2^ju-k,v)
				-
				\Psi(-k,v)
			\}
	\qquad
	\text{a.s.}
\end{align*}
Note that, with probability $1$, the series on the right-hand side converges
uniformly in $(u,v)$ on each compact rectangle of $\RealNum\times(0,1)$.
See \cite[Theorem~5.5]{Ayache2019Book}.
Here we divide $X$ into high and low frequency parts as follows:
\begin{align*}
	X^\hf(u,v)
	=
		\sum_{j\geq 1}
		\sum_{k\in\Integers}
			2^{-jv}
			\varepsilon_{jk}
			\{
				\Psi(2^ju-k,v)
				-
				\Psi(-k,v)
			\},\\
	X^\lf(u,v)
	=
		\sum_{j\leq 0}
		\sum_{k\in\Integers}
			2^{-jv}
			\varepsilon_{jk}
			\{
				\Psi(2^ju-k,v)
				-
				\Psi(-k,v)
			\}.
\end{align*}
Furthermore we divide $X^\hf$ into two parts as follows:
\begin{align*}
	X^\hf_+(u,v)
	&=
		\sum_{j\geq 1}
		\sum_{k\in\Integers}
			2^{-jv}
			\varepsilon_{jk}
			\Psi(2^ju-k,v),
	&
	X^\hf_-(u,v)
	&=
		-
		\sum_{j\geq 1}
		\sum_{k\in\Integers}
			2^{-jv}
			\varepsilon_{jk}
			\Psi(-k,v).
\end{align*}
Then we see that $X^\lf(u,v)$ and $X^\hf_-(u,v)$ are 
smooth stochastic processes in $u$ and $v$,
which follows from \cite[Proposition~6.1]{Ayache2019Book}.
The roughness of $X$ is present only in $X^\hf_+$.
We set 
\begin{align*}
	B^\hf_+(t)&=X^\hf_+(t,h(t)), & 
	B^\hf_-(t)&=X^\hf_-(t,h(t)), &
	B^\lf(t)&=X^\lf(t,h(t)).
\end{align*}
Then we see $B(t)=B^\hf_+(t)+B^\hf_-(t)+B^\lf(t)$ a.s.
This decomposition plays an important role in our study.

\subsection{Rough path analysis}
Next we collect elements of rough path analysis.
This part follows from \cite{LyonsQian2002Book}.
Let $T>0$ and set $I=[0,T]$ and $\triangle=\{(s,t)\mid \text{$s,t\in I$ with $s<t$}\}$.

We denote by $V$ a Banach space.
Let $p\geq 1$.
For $z\in C(I,V)$, we introduce the $p$-variation seminorm as follows:
\begin{align*}
	\|z\|_{\var{p}}
	=
		\left\{
			\sup
				\sum_{i=1}^n
					|z_{t_{i-1},t_i}|^p
		\right\}^{\frac{1}{p}},
\end{align*}
where the supremum runs over all finite partitions $0=t_0<\dots<t_n=T$ of the interval $I$
and $z_{s,t}=z_t-z_s$ for $s,t\in I$ with $s<t$.
For $z\in C(\triangle,V)$, we define $\|z\|_{\var{p}}$ in the same way.
For $\Theta=I$ or $\triangle$, we denote by $C^{\var{p}}(\Theta,V)$
the space of all functions $z\in C(\Theta,V)$ satisfying $\|z\|_{\var{p}}<\infty$.

We first define the largest space of rough paths as follows:
\begin{definition}
	Let $p\geq 2$.
	Let $\RP{z}=(1,\RP{z}^1,\dots,\RP{z}^{\floor{p}})\in C(\triangle,\bigoplus_{k=0}^{\floor{p}}(\RealNum^d)^{\otimes k})$.
	If $\RP{z}$ satisfies
	\begin{enumerate}
		\item	$\|\RP{z}^k\|_{\var{\frac{p}{k}}}<\infty$ for all $k=1,\dots,\floor{p}$,
		\item	$\RP{z}_{s,t}=\RP{z}_{s,u}\otimes \RP{z}_{u,t}$ for all $s,t,u\in I$ with $s<u<t$,
	\end{enumerate}
	then we say that $\RP{z}$ is a rough path with roughness $p$.

	We denote by $\Omega_{\var{p}}(\RealNum^d)$ the space of all rough paths
	and define the metric as follows:
	\begin{align*}
		d_{\var{p}}(\RP{z},\tilde{\RP{z}})
		=
			\sum_{k=1}^{\floor{p}}
				\|\RP{z}^k-\tilde{\RP{z}}^k\|_{\var{\frac{p}{k}}}.
	\end{align*}
	Then the space $(\Omega_{\var{p}}(\RealNum^d),d_{\var{p}})$
	is a complete metric space.
\end{definition}

We say $\RP{z}\in\Omega_{\var{p}}(\RealNum^d)$ is a (rough path) lift of $z\in C(I,\RealNum^d)$
if $\RP{z}^1_{s,t}=z_t-z_s$ for all $s,t\in I$ with $s<t$.
A typical example of a rough path is the canonical lift of a smooth function,
which is defined as follows:
\begin{definition}
	Let $p\geq 2$.
	Let $z\in C^{\var{1}}(I,\RealNum^d)$.
	Define $\RP{z}=(1,\RP{z}^1,\dots,\RP{z}^{\floor{p}})$ by $\RP{z}^1_{s,t}=z_t-z_s$ and
	$
		\RP{z}^k_{s,t}
		=
			\int_s^t
				\RP{z}^{k-1}_{s,u}\,
				\otimes
				dz_u
	$
	for $2\leq k\leq \floor{p}$.
	We call $\RP{z}$ a canonical lift of $z$.
	The space of all canonical lifts is denoted by $S\Omega_{\var{p}}(\RealNum^d)$.
\end{definition}

A geometric rough path defined as follows plays an important role in rough path analysis.
\begin{definition}
	Let $p\geq 2$.
	We set 
	\begin{align*}
		G\Omega_{\var{p}}(\RealNum^d)=\overline{S\Omega_{\var{p}}(\RealNum^d)}^{d_{\var{p}}}.
	\end{align*}
	An element of $G\Omega_{\var{p}}(\RealNum^d)$ is called a geometric rough path.
\end{definition}

We use the following elementary operation on rough paths to show our main result.
We can find the statement below in \cite{InahamaNaganuma2021}.
\begin{proposition}\label{prop473287432987491}
	Let $2\leq p<4$ and $1\leq q<2$ and assume $\frac{1}{p}+\frac{1}{q}>1$.
	Let $\RP{x}\in G\Omega_{\var{p}}(\RealNum^d)$ and $y\in C^{\var{q}}(I,\RealNum^d)$.
	Define $\RP{z}=(1,\RP{z}^1,\dots,\RP{z}^{\floor{p}})$ by
	\begin{gather*}
		\RP{z}^1_{s,t}
		=
			\RP{x}^1_{s,t}+\RP{y}^1_{s,t},
		\qquad\qquad
		\RP{z}^2_{s,t}
		=
			\RP{x}^2_{s,t}
			+
			\int_s^t
				\RP{x}^1_{s,u}
				\otimes
				dy_u
			+
			\int_s^t
				\RP{y}^1_{s,u}
				\otimes
				dx_u
			+\RP{y}^2_{s,t},\\
		\begin{aligned}
			\RP{z}^3_{s,t}
			&=
				\RP{x}^3_{s,t}
				+
				\int_s^t
					\left\{
						\int_s^v
							\RP{x}^1_{s,u}
							\otimes
							dy_u
					\right\}
					\otimes
					dx_v
				-
				\int_s^t
					\RP{y}^1_{s,v}
					\otimes
					d\RP{x}^2_{v,t}
				+
				\int_s^t
					\RP{y}^2_{s,v}
					\otimes
					dx_v\\
			&\phantom{=}
				\qquad
				+
				\int_s^t
					\RP{x}^2_{s,v}
					\otimes
					dy_v
				-
				\int_s^t
					\RP{x}^1_{s,v}
					\otimes
					d\RP{y}^2_{v,t}
				+
				\int_s^t
					\left\{
						\int_s^v
							\RP{y}^1_{s,u}
							\otimes
							dx_u
					\right\}
					\otimes
					dy_v
				+
				\RP{y}^3_{s,t}.
		\end{aligned}
	\end{gather*}
	Here $x_u=\RP{x}^1_{0,u}$ and
	$\RP{y}\in G\Omega_{\var{p}}(\RealNum^d)$ denotes the canonical lift of $y$
	and the integrals are understood in the Young sense.
	Note that $\RP{z}^3$ is defined for $3\leq p<4$.
	Then $\RP{z}\in G\Omega_{\var{p}}(\RealNum^d)$.
\end{proposition}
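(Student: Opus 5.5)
The plan is to prove the statement by approximation. First we check it directly when $\RP{x}$ is itself a canonical lift, and then we pass to the limit using continuity of the map $(\RP{x},y)\mapsto\RP{z}$.

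\emph{Smooth case.} Let $\RP{x}\in S\Omega_{\var{p}}(\RealNum^d)$ be the canonical lift of a path $x\in C^{\var{1}}(I,\RealNum^d)$. Expand the iterated integrals of $x+y$ by multilinearity. For $\RP{z}^2$ this gives exactly the four terms in the statement. For $\RP{z}^3$ there are eight terms. The mixed ones with the outer increment in $dx$ or $dy$ appear directly. The terms of the form $\int\!\int\!\int dy\,dx\,dx$ and $\int\!\int\!\int dx\,dy\,dy$ are rewritten with Fubini as $-\int_s^t \RP{y}^1_{s,v}\otimes d\RP{x}^2_{v,t}$ and $-\int_s^t\RP{x}^1_{s,v}\otimes d\RP{y}^2_{v,t}$. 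These are the forms in the statement, chosen so that only $\RP{x}^1,\RP{x}^2$ and $y$ enter. So in this case $\RP{z}$ is the canonical lift of $x+y$ (with $y$ first approximated by smooth paths in $q$-variation if it is not of bounded variation).

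\emph{Well-definedness and continuity.} For general $\RP{x}\in G\Omega_{\var{p}}$, each cross term is a Young integral of a $p$-variation (or $p/2$-variation) object against a $q$-variation one. Since $\frac1p+\frac1q>1$ and $\frac2p+\frac1q>1$ (as $p<4$ and $q<2$), these integrals exist, though some need care. The key estimates are the Young--Lo\`eve bounds, for example
\begin{align*}
	\Bigl|\int_s^t \RP{x}^1_{s,u}\otimes dy_u\Bigr|
	\le c\,\|\RP{x}^1\|_{\var{p},[s,t]}\|y\|_{\var{q},[s,t]}.
\end{align*}
The right-hand side is controlled by $\omega(s,t)^{1/p+1/q}$, where the control is $\omega=\|\RP{x}^1\|^p_{\var{p},[\cdot]}+\|y\|^q_{\var{q},[\cdot]}+\|\RP{x}^2\|^{p/2}_{\var{p/2},[\cdot]}$. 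This yields finite $p/2$- and $p/3$-variation of the new terms, together with the analogous difference estimates that are bilinear in $(\RP{x}-\tilde{\RP{x}},y)$. Hence the map is locally Lipschitz from $G\Omega_{\var{p}}\times C^{\var{q}}$ into the $d_{\var{p}}$-metric space. We also check the multiplicativity $\RP{z}_{s,t}=\RP{z}_{s,u}\otimes\RP{z}_{u,t}$. It holds in the smooth case and passes to the limit; alternatively it can be verified directly from Chen's relation for $\RP{x}$ and additivity of the Young integrals.

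\emph{Conclusion.} Take canonical lifts $\RP{x}(n)\to\RP{x}$ in $d_{\var{p}}$ and smooth $y(n)\to y$ in $q'$-variation for some $q<q'$ with $\frac1p+\frac1{q'}>1$; this is the usual interpolation trick, since smooth paths are not dense in $q$-variation itself. Then $\RP{z}(n)\in S\Omega_{\var{p}}$ converges to $\RP{z}$ in $d_{\var{p}}$, so $\RP{z}\in G\Omega_{\var{p}}$.

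\emph{Main obstacle.} The hardest step is the third-level terms such as $\int_s^t\RP{y}^1_{s,v}\otimes d\RP{x}^2_{v,t}$ and $\int_s^t\{\int_s^v\RP{y}^1_{s,u}\otimes dx_u\}\otimes dy_v$. Here the integrand or integrator is a two-parameter object, so one must show that the Riemann sums with the second argument fixed at $t$ converge, and that they obey the $\omega^{3/p}$-type bound. I would handle this with a sewing/Young--Lo\`eve argument: define the germ $\Xi_{u,v}=\RP{y}^1_{s,u}\otimes\RP{x}^2_{u,v}$ and use Chen's identity $\RP{x}^2_{u,t}=\RP{x}^2_{u,v}+\RP{x}^2_{v,t}+\RP{x}^1_{u,v}\otimes\RP{x}^1_{v,t}$ to bound $\delta\Xi$ by $\omega^{1/q+2/p}$, whose exponent exceeds $1$.
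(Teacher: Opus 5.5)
The paper gives no proof of this proposition; it only cites \cite{InahamaNaganuma2021}. Your global scheme is the standard one and is sound:
\begin{itemize}
\item for bounded-variation data, $\RP{z}$ is the canonical lift of $x+y$; the $yxx$ and $xyy$ terms do become $-\int_s^t\RP{y}^1_{s,v}\otimes d\RP{x}^2_{v,t}$ and $-\int_s^t\RP{x}^1_{s,v}\otimes d\RP{y}^2_{v,t}$ after integrating by parts, since $\RP{y}^1_{s,s}=0$ and $\RP{x}^2_{t,t}=0$;
\item Young--Lo\`eve estimates give local Lipschitz continuity of $(\RP{x},y)\mapsto\RP{z}$;
\item Chen's relation passes to uniform limits;
\item approximating $y$ in $q'$-variation is the right way to get density.
\end{itemize}

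The problem is the step you call the main obstacle, which as written would fail. With $t$ fixed, Chen's identity gives the increment of the integrator as
\[
\RP{x}^2_{v,t}-\RP{x}^2_{u,t}=-\RP{x}^2_{u,v}-\RP{x}^1_{u,v}\otimes\RP{x}^1_{v,t},
\]
so your germ $\Xi_{u,v}=\RP{y}^1_{s,u}\otimes\RP{x}^2_{u,v}$ drops the cross term. For that germ,
\[
\delta\Xi_{u,w,v}=\Xi_{u,v}-\Xi_{u,w}-\Xi_{w,v}=-\RP{y}^1_{u,w}\otimes\RP{x}^2_{w,v}+\RP{y}^1_{s,u}\otimes\RP{x}^1_{u,w}\otimes\RP{x}^1_{w,v}.
\]
The second term is only $O(\omega(u,v)^{2/p})$, with a prefactor $|\RP{y}^1_{s,u}|$ that does not shrink. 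Since $2/p\le 1$, no sewing argument applies. The Riemann sums for this germ genuinely diverge in general. The symmetric part $\frac12(\RP{x}^1_{u,v})^{\otimes 2}$ of $\RP{x}^2_{u,v}$ produces $\sum_i\RP{y}^1_{s,u_{i-1}}\otimes\frac12(\RP{x}^1_{u_{i-1},u_i})^{\otimes 2}$. This blows up, e.g., for fractional Brownian motion with $\frac14<H<\frac12$ and a linear $y$.

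The repair is to use the true increment, $\Xi_{u,v}=\RP{y}^1_{s,u}\otimes(\RP{x}^2_{v,t}-\RP{x}^2_{u,t})$. Then $\delta\Xi_{u,w,v}=-\RP{y}^1_{u,w}\otimes(\RP{x}^2_{v,t}-\RP{x}^2_{w,t})$, hence
\[
|\delta\Xi_{u,w,v}|\le\omega(u,w)^{1/q}\bigl\{\omega(w,v)^{2/p}+\omega(w,v)^{1/p}\omega(v,t)^{1/p}\bigr\}\le 2\,\omega(u,v)^{1/q+1/p}\,\omega(s,t)^{1/p}.
\]
So the sewing runs on the local exponent $\frac1q+\frac1p>1$, not on $\frac1q+\frac2p$. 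Since $\Xi_{s,t}=0$, it yields $|\int_s^t\RP{y}^1_{s,v}\otimes d\RP{x}^2_{v,t}|\le c\,\omega(s,t)^{1/q+2/p}$. The extra power $\omega^{1/p}$ comes from the non-local factor $\RP{x}^1_{v,t}$. Equivalently, $v\mapsto\RP{x}^2_{v,t}$ has $p$-variation on $[s,t]$ at most $c\,\omega(s,t)^{2/p}$, and the ordinary Young--Lo\`eve bound applies.

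More generally, none of the one-parameter paths you integrate ($\RP{x}^2_{s,\cdot}$, $\RP{x}^2_{\cdot,t}$, $\int_s^\cdot\RP{y}^1_{s,u}\otimes dx_u$) has finite $p/2$-variation; they have only $p$-variation. So every cross integral rests on $\frac1p+\frac1q>1$. With this correction, and the matching difference estimates for continuity, the rest of your argument goes through.
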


Next we introduce a sufficient condition for lifting Gaussian processes to rough paths.

\begin{definition}\label{def489238430212423}
	Let $\frac{1}{4}<\lambda\leq \frac{1}{2}$.
	Let $Z=\{Z(t)\}_{t\in I}$ be a real-valued centered Gaussian process
	with a continuous sample path.
	We say that $Z$ satisfies the Coutin-Qian condition with parameter $\lambda$
	if the following conditions are satisfied:
	\begin{enumerate}
		\item\label{item2121001}
				$
					\expect[|Z_t-Z_s|^2]
					\leq
						c(t-s)^{2\lambda}
				$
				for all $s,t\in I$ with $s<t$.
		\item\label{item2121002}
				$
					|\expect[\{Z(s+\epsilon)-Z(s)\}\{Z(t+\epsilon)-Z(t)\}]|
					\leq
						c
						(t-s)^{2\lambda-2}
						\epsilon^2
				$
				for all $s,t\in I$ and $\epsilon>0$
				with $\epsilon<t-s$ and $s+\epsilon,t+\epsilon\in I$.
	\end{enumerate}
	Here $c$ is a positive constant independent of $s$, $t$, and $\epsilon$.
\end{definition}

\begin{theorem}
	Let $Z=\{Z(t)=(Z^1(t),\dots,Z^d(t))\}_{t\in I}$ be a $d$-dimensional Gaussian process
	whose components are independent.
	Assume that all components satisfy the Coutin-Qian condition
	with a common parameter $\frac{1}{4}<\lambda\leq \frac{1}{2}$.
	Then we can construct a geometric rough path $\RP{Z}\in G\Omega_{\var{p}}(\RealNum^d)$
	associated with $Z$.
	Here $p$ is chosen so that $1/\lambda<p<\floor{1/\lambda}+1$.
\end{theorem}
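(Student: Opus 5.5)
This is the classical Coutin--Qian theorem, and I would prove it by dyadic piecewise-linear approximation. For $m\ge1$, let $Z^{(m)}$ be the piecewise-linear interpolation of $Z$ on the dyadic grid $D_m=\{kT2^{-m}\}$, and let $\RP{Z}^{(m)}\in S\Omega_{\var{p}}(\RealNum^d)$ be its canonical lift. The aim is to show that $\{\RP{Z}^{(m)}\}_m$ is almost surely Cauchy in $d_{\var{p}}$. Completeness of $(\Omega_{\var{p}}(\RealNum^d),d_{\var{p}})$ then gives a limit, which lies in $G\Omega_{\var{p}}(\RealNum^d)$ by definition of the closure. Its first level is $Z_t-Z_s$, because $Z^{(m)}\to Z$ uniformly and the $p$-variation of the increments converges.

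\textbf{Step 1: reduction to dyadic moment bounds.} I would use the standard Hambly--Lyons/Lyons--Qian dyadic control. For a multiplicative functional $\RP{w}$ with $\floor{p}\le3$ levels, $\|\RP{w}^k\|_{\var{p/k}}$ is bounded by sums of the form $\sum_n n^{\gamma}\sum_{l}|\RP{w}^k_{t^n_{l-1},t^n_l}|^{p/k}$ over dyadic intervals. For differences $\RP{Z}^{(m+1)}-\RP{Z}^{(m)}$ there are additional cross terms involving lower levels. Taking expectations, the problem reduces to showing, for each level $k\le\floor{p}$ and each dyadic interval $[s,t]=[t^n_{l-1},t^n_l]$, that
\[
\expect\bigl[|\RP{Z}^{(m),k}_{s,t}|^2\bigr]\le c\,2^{-2kn\lambda},
\qquad
\expect\bigl[|\RP{Z}^{(m+1),k}_{s,t}-\RP{Z}^{(m),k}_{s,t}|^2\bigr]\le c\,2^{-2kn\lambda}\,2^{-(n\vee m)\theta}
\]
for some $\theta>0$. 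The constants must be uniform in $m,n,l$. All random variables involved lie in a finite Wiener chaos of order at most $3$. Hypercontractivity therefore upgrades these $L^2$ bounds to $L^q$ bounds for large $q$. Summing over $m$ and applying Borel--Cantelli then gives almost sure Cauchyness, using $p>1/\lambda$ to absorb the factors $n^\gamma$ and $2^{n}$ (the number of intervals).

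\textbf{Step 2: level one.} The first-level bound is immediate from condition \iref{item2121001} of \dref{def489238430212423}. On a dyadic interval of length $2^{-n}$, the increment of $Z^{(m)}$ either equals an increment of $Z$ (for $n\le m$) or is a linear fraction of one grid increment (for $n>m$). In both cases the variance is at most $c\,2^{-2n\lambda}$ times a geometric factor. The difference $Z^{(m+1)}-Z^{(m)}$ vanishes on $D_m$, so for $n\le m$ the first-level difference is zero, and for $n>m$ it is bounded by $c\,2^{-m\lambda}2^{-(n-m)}$, which is of the required form.

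\textbf{Step 3: levels two and three (main obstacle).} By independence of the components, the diagonal entries $\RP{Z}^{(m),2,ii}_{s,t}=\frac12(Z^i_{s,t})^2$ are handled by Step 2. The off-diagonal second level for $i\ne j$ is
\[
\sum_{a,b}\ \text{(pieces)}\ \Delta_aZ^i\,\Delta_bZ^j,
\]
a sum over grid increments inside $[s,t]$. Its second moment factorizes into $\sum_{a,b,a',b'}R^i(a,a')R^j(b,b')$, where $R(a,a')=\expect[\Delta_aZ\,\Delta_{a'}Z]$ is the covariance of increments over grid intervals of mesh $\epsilon=2^{-m}$. Here condition \iref{item2121002} is the key input: $|R(a,a')|\le c\,|t_a-t_{a'}|^{2\lambda-2}\epsilon^2$ for separated intervals. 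Near the diagonal ($|a-a'|\le1$), condition \iref{item2121001} applies. Summing these bounds, using $2\lambda-2>-2$, gives
\[
\sum_{a'}|R(a,a')|\lesssim \epsilon\,(t-s)^{2\lambda-1}+\epsilon^{2\lambda},
\]
and this yields $\expect|\RP{Z}^{(m),2}_{s,t}|^2\lesssim (t-s)^{4\lambda}$, uniformly in $m$. For differences between levels $m$ and $m+1$, the same computation keeps one extra factor $(2^{-m}/(t-s))^{\theta}$ with $\theta>0$ small, which uses $\lambda>\frac14$. Level three (needed only when $\lambda\le\frac13$) is treated the same way, combined with Wick/Isserlis expansion to handle repeated indices. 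This combinatorial bookkeeping, uniform in $m$ and $n$ and relying on $4\lambda>1$, is where I expect the real work.
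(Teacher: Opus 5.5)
The paper does not prove this theorem; it quotes it from Coutin--Qian. Your architecture (dyadic piecewise-linear lifts, Lyons--Qian dyadic control, hypercontractivity, Borel--Cantelli) is the one used there. The gap is in Step~3, where you derive $\expect|\RP{Z}^{(m),2}_{s,t}|^2\lesssim(t-s)^{4\lambda}$ uniformly in $m$ from the row bound $\sum_{a'}|R(a,a')|\lesssim\epsilon(t-s)^{2\lambda-1}+\epsilon^{2\lambda}$.

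For $\lambda<\frac12$ this inference fails. Summing the row bound over the $N=(t-s)/\epsilon$ indices $a$ gives $\sum_{a,a'}|R(a,a')|\approx N\epsilon^{2\lambda}=(t-s)\epsilon^{2\lambda-1}$. Bounding the terms of $\sum_{a<b,\,a'<b'}R^i(a,a')R^j(b,b')$ in absolute value only enforces $a\approx a'$ and $b\approx b'$. That leaves about $N^2$ terms of size $\epsilon^{4\lambda}$, i.e.\ a bound $(t-s)^2\epsilon^{4\lambda-2}$ that blows up as $m\to\infty$. The uniform bound holds only because of sign cancellations in $R$, which condition (2), being an absolute-value bound, cannot capture in this expansion.

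The missing idea is to estimate the \emph{differences} first. For dyadic $[s,t]$ of length $2^{-n}\ge2^{-m}$ one has exactly
\[
\RP{Z}^{(m+1),2}_{s,t}-\RP{Z}^{(m),2}_{s,t}
=\tfrac12\sum_{l}\bigl(\Delta_{2l-1}Z\otimes\Delta_{2l}Z-\Delta_{2l}Z\otimes\Delta_{2l-1}Z\bigr),
\]
where $\Delta_{2l-1},\Delta_{2l}$ are the two $D_{m+1}$-halves of the $l$-th $D_m$-interval. This is a sum of \emph{localized} areas. So for $i\ne j$ its second moment is a sum of products $R^iR^j$ in which \emph{both} factors are controlled by condition (2) in terms of $|l-l'|$. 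Each term is $\lesssim\epsilon^{4\lambda}(1\vee|l-l'|)^{4\lambda-4}$, which is summable in $l'$, and this gives $\lesssim2^{-n}2^{-m(4\lambda-1)}$. Its square root is summable in $m\ge n$ precisely because $\lambda>\frac14$. The uniform bound on $\RP{Z}^{(m),2}_{s,t}$ then follows by telescoping from $m=n$, where $\RP{Z}^{(n),2}_{s,t}=\frac12Z_{s,t}^{\otimes2}$. At level three the corresponding difference is no longer purely local, since it pairs local areas with distant increments. ``Treated the same way'' does not say how to handle this, and that is where the real work in Coutin--Qian lies.

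Second, your Step~1 target is stated too strongly. At $n=m$ the level-two difference is the area of the two halves of $[s,t]$. For fBm with $\Hurst=\lambda$ its second moment is $\asymp2^{-4m\lambda}$, with no extra factor $2^{-m\theta}$. Likewise, at level one with $n=m+1$, consider the left half $[s,u]$ of a $D_m$-interval $[s,t']$. There the difference is $Z_{s,u}-\frac12Z_{s,t'}=\frac12(Z_{s,u}-Z_{u,t'})$, with variance $\asymp2^{-2n\lambda}$. So your $c\,2^{-m\lambda}2^{-(n-m)}$ is not ``of the required form.'' Even the extra factor you find in Step~3, $(2^{-m}/(t-s))^\theta=2^{-(m-n)\theta}$, decays in $m-n$, not in $n\vee m$.

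The true bounds, which decay in $|n-m|$, do suffice. You can either sum them directly against $n^\gamma2^n$, or note that they imply $c\,2^{-2kn\lambda'}2^{-m\theta}$ for any $\lambda'<\lambda$ and some $\theta>0$, and then choose $\lambda'$ with $p\lambda'>1$. The same trade also absorbs the factor $\log((t-s)/\epsilon)$ that your row sum picks up at $\lambda=\frac12$.
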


\subsection{Main results}
Here we restrict the time parameter $t$ of mBm
to lie in the compact interval $I=[0,T]$, where $T>0$,
and introduce the notation to denote the infimum over $I$ of Hurst functions.
Roughly speaking, $\lambda_\ast=\lambda_\ast(I)$
denotes the infimum of the component Hurst functions of a $d$-dimensional mBm over $I$.

Let $B=\{B(t)\}_{t\in I}$ be a one-dimensional mBm with Hurst function $h=\{h(t)\}_{t\in I}$.
Set
\begin{align*}
	\lambda_\ast
	=
		\lambda_\ast(I)
	=
		\inf_{t\in I} h(t).
\end{align*}
For notational simplicity, we write $\lambda_\ast$ instead of $\lambda_\ast(I)$.

Let $B=\{B(t)=(B^1(t),\dots,B^d(t))\}_{t\in I}$ be a $d$-dimensional mBm
and denote by $h^\alpha=\{h^\alpha(t)\}_{t\in I}$
the Hurst function for the $\alpha$th component $B^\alpha=\{B^\alpha(t)\}_{t\in I}$.
Set
\begin{align*}
	\lambda_\ast^\alpha
	&=
		\lambda^\alpha_\ast(I)
	=
		\inf_{t\in I} h^\alpha(t),
	&
	\lambda_\ast
	&=
		\lambda_\ast(I)
	=
		\min\{\lambda_\ast^1,\dots,\lambda_\ast^d\}.
\end{align*}
Then the following is the first main result.
\begin{lemma}\label{lemma8453908409128}
	Let $B=\{B(t)\}_{t\in I}$ be a one-dimensional mBm.
	The constant $\lambda_\ast$ for $B$ is defined as above.
	Assume that $\lambda_\ast>\frac{1}{4}$.
	The process $B^\hf_+=\{B^\hf_+(t)\}_{t\in I}$ satisfies the Coutin-Qian condition
	for $\frac{1}{4}<\lambda<\min \{\frac{1}{2},\lambda_\ast\}$.
\end{lemma}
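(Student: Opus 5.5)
Both parts of the Coutin--Qian condition (\dref{def489238430212423}) will be verified directly from the covariance of $B^\hf_+$, computed from the wavelet series. Since the $\varepsilon_{jk}$ are independent standard Gaussians,
\begin{align*}
	\expect[B^\hf_+(s)B^\hf_+(t)]
	=
		\sum_{j\geq 1}\sum_{k\in\Integers}
			2^{-j(h(s)+h(t))}
			\Psi(2^js-k,h(s))\Psi(2^jt-k,h(t)).
\end{align*}
The required facts about $\Psi$ come from $\hat\psi$ being smooth, compactly supported and vanishing near $0$. First, $\Psi$ is $C^\infty$ in $(y,v)$ and, together with $\partial_y\Psi$, $\partial_y^2\Psi$, $\partial_v\Psi$, $\partial_y\partial_v\Psi$, satisfies $|\cdot|\le c_L(1+|y|)^{-L}$ for every $L$, uniformly for $v$ in a compact subset of $(0,1)$ containing $h(I)$. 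Consequently $\sum_k(1+|a-k|)^{-L}(1+|b-k|)^{-L}\le c(1+|a-b|)^{-L}$. Second, $h$ is $C^1$ on $I$, and $2^{-jh(t)}\le 2^{-j\lambda_\ast}$ for $j\ge1$.

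For \iref{item2121001}, write $B^\hf_+(t)-B^\hf_+(s)=\sum_{j\ge1}\sum_k\varepsilon_{jk}a_{jk}$ with
\begin{align*}
	a_{jk}=2^{-jh(t)}\Psi(2^jt-k,h(t))-2^{-jh(s)}\Psi(2^js-k,h(s)),
\end{align*}
and bound $\sum|a_{jk}|^2$. For $2^j(t-s)\le1$ use the mean value theorem in both variables: the $y$-derivative contributes $2^{-j\lambda_\ast}2^j(t-s)$, and the $v$-derivative contributes $j2^{-j\lambda_\ast}|h(t)-h(s)|\le cj2^{-j\lambda_\ast}(t-s)$. The sum over $k$ stays bounded by the decay of $\Psi$. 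For $2^j(t-s)>1$ simply use $c\,2^{-2j\lambda_\ast}$. Summing the two regimes gives $c(t-s)^{2\lambda_\ast}$ plus a harmless $(t-s)^2$ term, hence $\le c(t-s)^{2\lambda}$ on the bounded interval $I$ for every $\lambda\le\lambda_\ast$.

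Condition \iref{item2121002} is the main obstacle. I write the increment over $[s,s+\epsilon]$ via Taylor's formula: $a_{jk}(s,s+\epsilon)=\epsilon\int_0^1 D_{jk}(s+\theta\epsilon)\,d\theta$. Here $D_{jk}(r)=\frac{d}{dr}\bigl[2^{-jh(r)}\Psi(2^jr-k,h(r))\bigr]$ is bounded by $c(2^{j(1-\lambda_\ast)}+j2^{-j\lambda_\ast})(1+|2^jr-k|)^{-L}$. The product of two such increments at well-separated points then gets the cross-sum bound
\begin{align*}
	\sum_k|D_{jk}(r)D_{jk}(r')|
	\le
		c\,2^{2j(1-\lambda_\ast)}(1+2^j|r-r'|)^{-L},
\end{align*}
with $|r-r'|\ge(t-s)-\epsilon$.

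The difficulty is that $\epsilon<t-s$ allows $r,r'$ to be close when $\epsilon$ is comparable to $t-s$, and that $L$ derivatives cannot be spent everywhere. So I split into cases. If $\epsilon\ge (t-s)/2$, I use Cauchy--Schwarz and \iref{item2121001}: the covariance is at most $c\epsilon^{2\lambda}\le c(t-s)^{2\lambda-2}\epsilon^2$, which holds since $\epsilon\sim t-s$. If $\epsilon<(t-s)/2$, then $|r-r'|\ge (t-s)/2$, and I sum the display times $\epsilon^2$ over $j\ge1$. This gives
\begin{align*}
	\epsilon^2\sum_{j}2^{2j(1-\lambda_\ast)}(1+2^j(t-s))^{-L}\le c\,\epsilon^2(t-s)^{2\lambda_\ast-2}
\end{align*}
for $L>2$: split at $2^j\sim(t-s)^{-1}$, where the geometric sum below the cutoff dominates and the tail above it is controlled by the decay. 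The $j2^{-j\lambda_\ast}$ terms give smaller contributions. Finally, $(t-s)^{2\lambda_\ast-2}\le c(t-s)^{2\lambda-2}$ on bounded $I$ for $\lambda<\lambda_\ast$, and the restriction $\lambda<\frac12$ is only the range demanded by \dref{def489238430212423}. The care needed is in uniformity of the Schwartz-type bounds in $v$, including the $\log$-factors $j$ from differentiating $2^{-jv}$; these are absorbed by taking $\lambda$ strictly less than $\lambda_\ast$.
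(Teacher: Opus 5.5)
Your proof is correct, but it handles the key estimates differently from the paper in both conditions.

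For \iref{item2121001}, the paper freezes one variable at a time. It writes the increment as a $u$-increment of $X^\hf_+$ at fixed $v$ plus a $v$-increment at fixed $u$. It then bounds the $u$-increment scale by scale via the interpolation $|\Delta\Psi|^2\le|\Delta\Psi|^{2-2\lambda}|\Delta\Psi|^{2\lambda}$ (\pref{prop483920184029841}, \pref{prop9842309340}). This gives $2^{-2j(h(t)-\lambda)}|t-s|^{2\lambda}$, which is summable in $j$ only because $\lambda<\lambda_\ast$. You instead differentiate along $r\mapsto(2^jr-k,h(r))$ and split the scales at $2^j\approx(t-s)^{-1}$.

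For \iref{item2121002}, the paper uses neither a case split in $\epsilon$ nor a split in $j$. In \pref{prop49320431941} it applies the pure-power bound of \pref{prop32132183190} with exponent $2L=2-2\lambda$. Then $2^{-2jL}$ turns $2^{2j(1-\lambda_\ast)}$ into $2^{-2j(\lambda_\ast-\lambda)}$, and $(u_2-u_1)^{-2L}$ is exactly $(t-s)^{2\lambda-2}$. The regime $\epsilon\uparrow t-s$ is absorbed by a uniform bound on $\int_{[0,1]^2}\{1+a(\theta_2-\theta_1)\}^{-2L}\,d\theta_1d\theta_2$ for $0<a<1$, which holds since $2L<2$. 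You treat $\epsilon\ge(t-s)/2$ by Cauchy--Schwarz and \iref{item2121001}. You treat $\epsilon<(t-s)/2$ by separation of the points, the decay $(1+2^j|r-r'|)^{-L}$ (bounded near the diagonal) with $L>2$, and a dyadic split.

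Your route is the standard Littlewood--Paley-style argument, and it is slightly sharper. Since $j2^{-j\lambda_\ast}\le 2^{j(1-\lambda_\ast)}$, nothing in it needs $\lambda<\lambda_\ast$. It therefore yields both conditions with $\lambda=\lambda_\ast$ whenever $\lambda_\ast\le\frac12$, so your closing remark that the factors $j$ force a strict inequality is unnecessary. The paper's route gains a single per-scale estimate, allowing arbitrary independent $v$-increments, that sums geometrically in one line; the price is losing that endpoint.
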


Then we see that a $d$-dimensional mBm can be lifted to a geometric rough path as follows:
\begin{theorem}\label{thm940239401923041}
	Let $B=\{B(t)\}_{t\in I}$ be a $d$-dimensional mBm.
	The constant $\lambda_\ast$ for $B$ is defined as above.
	Assume that $\lambda_\ast>\frac{1}{4}$.
	Choose $\frac{1}{4}<\lambda<\min\{\frac{1}{2},\lambda_\ast\}$.
	Then we can construct a geometric rough path $\RP{B}\in G\Omega_{\var{p}}(\RealNum^d)$
	associated with $B$.
	Here $p$ is chosen so that $1/\lambda<p<\floor{1/\lambda}+1$.
\end{theorem}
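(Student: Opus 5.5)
We will prove \tref{thm940239401923041} by combining \lref{lemma8453908409128}, the Coutin--Qian theorem, and the translation operation of \pref{prop473287432987491}.

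\emph{Step 1: decomposition.} For each component $\alpha$ we write $B^\alpha=B^{\alpha,\hf}_+ + R^\alpha$ almost surely, where $R^\alpha=B^{\alpha,\hf}_-+B^{\alpha,\lf}$. Since distinct components are built from independent Brownian measures, the processes $B^{1,\hf}_+,\dots,B^{d,\hf}_+$ are independent centered Gaussian processes. We must also check that each has continuous sample paths. This follows from the almost sure uniform convergence of the wavelet series on compact rectangles, combined with continuity of $h$. The argument is that $X^\hf_+=X-X^\hf_-$ plus a constant shift, and $X^\hf_-$ is smooth.

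\emph{Step 2: lifting the irregular part.} Fix $\frac14<\lambda<\min\{\frac12,\lambda_\ast\}$. Since $\lambda_\ast\le\lambda_\ast^\alpha$ for every $\alpha$, this $\lambda$ is admissible in \lref{lemma8453908409128} for each component. Hence every $B^{\alpha,\hf}_+$ satisfies the Coutin--Qian condition with the common parameter $\lambda$. The Coutin--Qian theorem then yields a geometric rough path $\RP{x}\in G\Omega_{\var{p}}(\RealNum^d)$ lifting $x=B^\hf_+=(B^{1,\hf}_+,\dots,B^{d,\hf}_+)$, for $1/\lambda<p<\floor{1/\lambda}+1$. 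Note that $2<p<4$ here.

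\emph{Step 3: regularity of the remainder.} We show $y=R=(R^1,\dots,R^d)$ lies in $C^{\var{1}}(I,\RealNum^d)$, so we may take $q=1$ and $\frac1p+\frac1q>1$ holds trivially. By \cite[Proposition~6.1]{Ayache2019Book}, almost surely $X^\hf_-$ and $X^\lf$ are $C^1$ (indeed smooth) in $(u,v)$ on compact subsets of $\RealNum\times(0,1)$. The function $h$ is $C^1$ and takes values in a compact subinterval of $(0,1)$ on $I$. The chain rule therefore gives $t\mapsto X^\hf_-(t,h(t))+X^\lf(t,h(t))$ in $C^1(I)$, hence of bounded variation.

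\emph{Step 4: translation.} We apply \pref{prop473287432987491} to $\RP{x}$ and $y$. This gives $\RP{B}:=\RP{z}\in G\Omega_{\var{p}}(\RealNum^d)$ with $\RP{z}^1_{s,t}=x_{s,t}+y_{s,t}=B_t-B_s$, so $\RP{B}$ is a lift of $B$ on the full-probability event where all the above hold.

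The main obstacle is Step 3. Justifying that the cited smoothness of $X^\hf_-$ and $X^\lf$ holds jointly in $(u,v)$ requires control of the $v$-derivatives of $\Psi(-k,v)$ and of the low-frequency terms. I would first try to rely on the cited result directly; failing that, I would bound $\partial_v\Psi$ and $\partial_u\Psi$ by rapidly decaying functions and use $\sup|\varepsilon_{jk}|\lesssim\sqrt{\log(2+|j|+|k|)}$ to obtain uniform convergence of the differentiated series. The difficult analytic input, namely the Coutin--Qian estimates, is already contained in \lref{lemma8453908409128}.
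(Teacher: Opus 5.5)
Your proposal is correct and follows the paper's proof essentially step for step: Lemma~\ref{lemma8453908409128} applied componentwise with the common $\lambda$, the Coutin--Qian theorem for $B^\hf_+$, $C^1$-regularity of $B-B^\hf_+$ (so $q=1$), and the translation of Proposition~\ref{prop473287432987491}. Your explicit checks of independence, sample-path continuity and $2<p<4$ are details the paper leaves implicit. One small slip: in Step~1 the identity is $X^\hf_+=X-X^\hf_{-}-X^\lf$, not $X-X^\hf_{-}$ plus a constant, but continuity follows the same way since $X^\lf$ is also smooth.
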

We prove \lref{lemma8453908409128} in the next section.
Assuming this lemma, we prove \tref{thm940239401923041} as follows:
\begin{proof}[Proof of \tref{thm940239401923041}]
	For every $1\leq\alpha\leq d$, we define $(B^\alpha)^\hf_+=\{(B^\alpha)^\hf_+(t)\}_{t\in I}$
	in the same way as in the one-dimensional case.
	Write $B^\hf_+=\{B^\hf_+(t)=((B^1)^\hf_+(t),\dots,(B^d)^\hf_+(t))\}_{t\in I}$.
	From \lref{lemma8453908409128}, we see that all components of $B^\hf_+$ satisfy
	the Coutin-Qian condition for $\lambda$.
	Hence there exists $\RP{B}^\hf_+\in G\Omega_{\var{p}}(\RealNum^d)$.
	Here recall $Y(t)=B(t)-B^\hf_+(t)$ is continuously differentiable and hence of finite variation.
	Using \pref{prop473287432987491}, we see the conclusion.
\end{proof}

\section{The generator of mBm and lemmas}\label{SecAnalysisOfGenerator}
By following \cite{Ayache2019Book}, we introduce
\begin{align*}
	B_j(y,v)
	&=
		\sum_{k\in\Integers}
			\varepsilon_{jk}
			\Psi(y-k,v),
	&
	\tilde{A}_j(u,v)
	&=
		2^{-jv}
		B_j(2^ju,v).
\end{align*}
Then we have
\begin{align}\label{eq493209201}
	X^\hf_+(u,v)
	=
		\sum_{j\geq 1}
			\tilde{A}_j(u,v).
\end{align}
Recall that $\Psi$ and $B_j$ are infinitely differentiable in $y$ and $v$.
See also \pref{prop843092123} below and \cite[Lemmas~6.11 and 6.12]{Ayache2019Book}.
Hence $\tilde{A}_j$ is infinitely differentiable in $u$ and $v$.

\subsection{Estimates of $\Psi$}
Here we collect propositions concerning $\Psi$.
In the following assertions, the parameter $v$ runs over a compact interval $K\subset\RealNum$.
\begin{proposition}[{\cite[Propositions~5.10 and 5.11]{Ayache2019Book}}]\label{prop843092123}
	We have $\Psi\in C^{\infty,\infty}(\RealNum\times\RealNum)$.
	Let $m,n\in\NaturalNum\cup\{0\}$.
	For all $(y,v)\in\RealNum\times K$, we have
	\begin{align*}
		|\partial_y^m\partial_v^n \Psi(y,v)|
		\leq
			c(3+|y|)^{-L}.
	\end{align*}
	Here, $L$ is an arbitrary positive constant,
	while $c$ is a positive constant that may depend on $L$ but not on $y$ or $v$.
\end{proposition}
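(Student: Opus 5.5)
The plan is to work directly from the Fourier representation
\begin{align*}
	\Psi(y,v)
	=
		\frac{1}{\sqrt{2\pi}}
		\int_\RealNum
			e^{\ImUnit y\eta}
			\frac{\hat{\psi}(\eta)}{|\eta|^{v+1/2}}\,
			d\eta .
\end{align*}
The key point is that $\hat{\psi}$ is infinitely differentiable with support in the compact annulus $\{\frac{2}{3}\pi\leq|\eta|\leq\frac{8}{3}\pi\}$, which stays away from the origin. On this set $|\eta|^{-v-1/2}=e^{-(v+1/2)\log|\eta|}$ is a $C^\infty$ function of $(\eta,v)\in(\RealNum\setminus\{0\})\times\RealNum$. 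So the integrand is a smooth, compactly supported function of $\eta$ that depends smoothly on the parameters $(y,v)$.

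\emph{Step 1 (smoothness).} I will differentiate under the integral sign. Formally $\partial_y^m\partial_v^n\Psi(y,v)$ equals $(2\pi)^{-1/2}\int_\RealNum e^{\ImUnit y\eta}g_{m,n}(\eta,v)\,d\eta$, where
\begin{align*}
	g_{m,n}(\eta,v)
	=
		(\ImUnit\eta)^m(-\log|\eta|)^n\,\hat{\psi}(\eta)\,|\eta|^{-v-1/2}.
\end{align*}
This is justified by dominated convergence. For $v$ in any bounded interval, every such $\eta$-integrand is bounded by a constant times the indicator of the fixed compact annulus, because $|\eta|$ and $|\log|\eta||$ are bounded above there and $|\eta|$ is bounded below. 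Induction on $m+n$, together with continuity of the resulting integrals in $(y,v)$, then gives $\Psi\in C^{\infty,\infty}(\RealNum\times\RealNum)$.

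\emph{Step 2 (decay).} Fix $N\in\NaturalNum$. Since $g_{m,n}(\cdot,v)$ is smooth with compact support, integrating by parts $N$ times in $\eta$ gives
\begin{align*}
	(\ImUnit y)^N\,\partial_y^m\partial_v^n\Psi(y,v)
	=
		\frac{(-1)^N}{\sqrt{2\pi}}
		\int_\RealNum
			e^{\ImUnit y\eta}\,
			\partial_\eta^N g_{m,n}(\eta,v)\,
			d\eta,
\end{align*}
and there are no boundary terms. The function $\partial_\eta^N g_{m,n}$ is continuous in $(\eta,v)$ on the compact set $\{\frac{2}{3}\pi\leq|\eta|\leq\frac{8}{3}\pi\}\times K$. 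It is therefore bounded there, and it vanishes for $\eta$ outside the annulus. This yields $|y|^N|\partial_y^m\partial_v^n\Psi(y,v)|\leq c_N$ uniformly in $v\in K$. The case $N=0$ gives the plain bound $|\partial_y^m\partial_v^n\Psi(y,v)|\leq c_0$. Adding the two bounds gives $(3+|y|)^N|\partial_y^m\partial_v^n\Psi(y,v)|\leq c$. Given $L>0$, I take $N\geq L$ and use $(3+|y|)^{-N}\leq(3+|y|)^{-L}$ to conclude.

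The only delicate point is keeping the constants uniform in $v\in K$. This is exactly what the compactness of $K$, combined with the joint continuity of $\partial_\eta^N g_{m,n}$ on the compact annulus, provides. Smoothness in $v$ holds for all real $v$, not only $v\in(0,1)$, precisely because $\hat{\psi}$ vanishes near $0$.
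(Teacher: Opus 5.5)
Your proposal is correct. Differentiating under the integral sign and then integrating by parts $N$ times in $\eta$ is the standard argument. It works because $\hat{\psi}$ is smooth, supported in $\{\frac{2}{3}\pi\leq|\eta|\leq\frac{8}{3}\pi\}$ and zero near the origin, so $(\eta,v)\mapsto\hat{\psi}(\eta)|\eta|^{-v-1/2}$ is jointly smooth and the bounds are uniform in $v\in K$. The paper gives no proof of its own and cites Propositions~5.10 and 5.11 of \cite{Ayache2019Book}, where the result is established in essentially this way.
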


\begin{corollary}\label{cor5984105235}
	Let $m,n\in\NaturalNum\cup\{0\}$.
	For all $(y',v')$, $(y,v)\in\RealNum\times K$, and $k\in\Integers$,
	we have
	\begin{align*}
		|\partial_y^m\partial_v^n\Psi(y'-k,v')-\partial_y^m\partial_v^n\Psi(y-k,v)|
		\leq
			c
			\{
				|y'-y|
				+
				|v'-v|
			\}.
	\end{align*}
	Here $c$ is a positive constant independent of $y', y, v', v$, and $k$.
\end{corollary}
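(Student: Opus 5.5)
The plan is to reduce the corollary to the uniform derivative bounds of \pref{prop843092123} through the mean value theorem, changing one variable at a time. Fix $m,n\in\NaturalNum\cup\{0\}$, set $F=\partial_y^m\partial_v^n\Psi$, and insert the intermediate point $(y-k,v')$:
\begin{align*}
	F(y'-k,v')-F(y-k,v)
	=
		\{F(y'-k,v')-F(y-k,v')\}
		+
		\{F(y-k,v')-F(y-k,v)\}.
\end{align*}
Since $\Psi\in C^{\infty,\infty}(\RealNum\times\RealNum)$ by \pref{prop843092123}, $F$ is continuously differentiable in each variable, so the mean value theorem applies to both brackets.

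For the first bracket, I apply the mean value theorem in $y$ with $v'$ held fixed. This gives a point $\zeta$ on the segment between $y'-k$ and $y-k$ such that the bracket equals $\partial_y^{m+1}\partial_v^n\Psi(\zeta,v')\,(y'-y)$. Now apply \pref{prop843092123} with the pair $(m+1,n)$ and any fixed $L>0$. It gives $|\partial_y^{m+1}\partial_v^n\Psi(\zeta,v')|\leq c(3+|\zeta|)^{-L}\leq c\,3^{-L}$, because $v'\in K$. This bound depends neither on $\zeta$ nor on $k$.

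For the second bracket, I apply the mean value theorem in $v$ with $y-k$ held fixed. The point to check is that the intermediate value of $v$ stays in $K$: since $K$ is a compact interval, it is convex, so the segment joining $v$ and $v'$ lies in $K$. Then \pref{prop843092123} with the pair $(m,n+1)$ bounds $|\partial_y^m\partial_v^{n+1}\Psi|$ on $\RealNum\times K$ by a constant, which gives a bound $c|v'-v|$ uniformly in $y$ and $k$.

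Adding the two estimates gives the claim, with $c$ the maximum of the two constants; it depends only on $m$, $n$, $K$ and the chosen $L$. The independence from $k$ holds because the shift by $k$ only moves the first argument, while the bounds of \pref{prop843092123} are uniform over all $y\in\RealNum$. I do not expect any real obstacle. The only points needing care are the convexity of $K$ in the $v$-step and invoking \pref{prop843092123} with the raised derivative orders $(m+1,n)$ and $(m,n+1)$ rather than $(m,n)$.
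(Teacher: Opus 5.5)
Your proposal is correct and is essentially the paper's argument. The paper applies the fundamental theorem of calculus along the straight segment from $(y-k,v)$ to $(y'-k,v')$ instead of your two-step path through $(y-k,v')$. It then bounds $\partial_y^{m+1}\partial_v^n\Psi$ and $\partial_y^m\partial_v^{n+1}\Psi$ uniformly on $\RealNum\times K$ via \pref{prop843092123}, exactly as you do, including the implicit use of the convexity of $K$.
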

\begin{proof}
	The fundamental theorem of calculus yields
	\begin{multline*}
		\partial_y^m\partial_v^n\Psi(y'-k,v')-\partial_y^m\partial_v^n\Psi(y-k,v)\\
		\begin{aligned}
			&=
				(y'-y)
				\int_0^1
					\partial_y^{m+1}\partial_v^n \Psi
						(
							\theta (y'-k)+(1-\theta)(y-k),
							\theta v'+(1-\theta)v
						)\,
					d\theta\\
			&\phantom{=}
				\qquad
				+
				(v'-v)
				\int_0^1
					\partial_y^m\partial_v^{n+1} \Psi
						(
							\theta (y'-k)+(1-\theta)(y-k),
							\theta v'+(1-\theta)v
						)\,
					d\theta.
		\end{aligned}
	\end{multline*}
	Combining this identity with \pref{prop843092123},
	we see the assertion.
\end{proof}

\subsection{Estimates of $B_j$}
Here we collect estimates of $B_j$ ($j\in\Integers$).
In the following assertions, the parameter $v$ runs over a compact interval $K\subset\RealNum$.
\begin{proposition}\label{prop4980239012423}
	Let $m,n\in\NaturalNum\cup\{0\}$.
	Then, we have
	\begin{align*}
		\sup_{j\in\Integers}
		\sup_{(y,v)\in\RealNum\times K}
			\expect[\{\partial_y^m\partial_v^n B_j(y,v)\}^2]
		<
			\infty.
	\end{align*}
\end{proposition}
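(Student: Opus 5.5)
Since the $\varepsilon_{jk}$ are independent standard Gaussians, I would start from the identity
\begin{align*}
	\expect[\{\partial_y^m\partial_v^n B_j(y,v)\}^2]
	=
		\sum_{k\in\Integers}
			\{\partial_y^m\partial_v^n\Psi(y-k,v)\}^2 .
\end{align*}
To use it, I first need term-by-term differentiation of the series defining $B_j$ to be legitimate in $L^2(\Omega)$. That is, $\sum_k \varepsilon_{jk}\partial_y^m\partial_v^n\Psi(y-k,v)$ should converge in $L^2$ and equal $\partial_y^m\partial_v^n B_j(y,v)$. Once this holds, the expectation is exactly the sum above. Its right-hand side does not depend on $j$, so the supremum over $j$ is trivial.

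Next I bound the sum uniformly in $(y,v)\in\RealNum\times K$ using \pref{prop843092123} with $L=1$:
\begin{align*}
	\sum_{k\in\Integers}
		\{\partial_y^m\partial_v^n\Psi(y-k,v)\}^2
	\leq
		c\sum_{k\in\Integers}(3+|y-k|)^{-2}.
\end{align*}
The right-hand side is $1$-periodic in $y$. For $y\in[0,1)$ it is bounded by $c\sum_{k}(2+|k|)^{-2}<\infty$, since $|y-k|\geq |k|-1$. Therefore the sum is bounded by a constant independent of $y$, $v$ and $j$, which gives the claim.

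The only real point is justifying the term-by-term differentiation. I would argue as follows. By \pref{prop843092123} with $L=2$ and independence of the $\varepsilon_{jk}$, the differentiated series converges almost surely and locally uniformly in $(y,v)$: we have $\sum_k |\varepsilon_{jk}|(3+|y-k|)^{-2}<\infty$ a.s. uniformly on compacts, because $|\varepsilon_{jk}|\leq c\sqrt{\log(2+|k|)}$ a.s. Hence the series defining $B_j$ may be differentiated termwise, and each derivative is the corresponding series (cf.~\cite[Lemmas~6.11 and 6.12]{Ayache2019Book}). The same series also converges in $L^2(\Omega)$, because its terms are orthogonal with square-summable coefficients. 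The almost-sure limit and the $L^2$ limit coincide, so the variance formula holds.
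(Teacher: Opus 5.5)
Your proposal is correct and follows the paper's proof. The paper also writes the second moment as $\sum_k\{\partial_y^m\partial_v^n\Psi(y-k,v)\}^2$, bounds it through \pref{prop843092123} by $c\sum_k(3+|y-k|)^{-2}$, and concludes with the periodic-sum bound of \lref{lemma849208094238902}, which is exactly your direct estimate. Your explicit justification of termwise differentiation, and of the agreement between the almost-sure and $L^2$ limits, is a step the paper leaves implicit, relying on \cite{Ayache2019Book}.
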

\begin{proof}
	We have
	\begin{align*}
		\expect[\{\partial_y^m\partial_v^n B_j(y,v)\}^2]
		=
			\expect
				\left[
					\left\{
						\sum_{k\in\Integers}
							\varepsilon_{jk}
							\partial_y^m\partial_v^n
								\Psi(y-k,v)
					\right\}^2
				\right]
		=
			\sum_{k\in\Integers}
				\{\partial_y^m\partial_v^n \Psi(y-k,v)\}^2.
	\end{align*}
	\pref{prop843092123} yields
	\begin{align*}
		\expect[\{\partial_y^m\partial_v^n B_j(y,v)\}^2]
		\leq
			\sum_{k\in\Integers}
				\{c(3+|y-k|)^{-1}\}^2.
	\end{align*}
	This and \lref{lemma849208094238902} complete the proof.
\end{proof}

\begin{proposition}\label{prop483920184029841}
	Let $m,n\in\NaturalNum\cup\{0\}$.
	Then for all $(y',v'),(y,v)\in\RealNum\times K$, we have
	\begin{align*}
		\expect[\{\partial_y^m\partial_v^n B_j(y',v')-\partial_y^m\partial_v^n B_j(y,v)\}^2]
		&\leq
			c
			\{
				|y'-y|^{2\lambda}
				+
				|v'-v|^2
			\}.
	\end{align*}
	Here, $0<\lambda<1$ is an arbitrary constant,
	and $c$ is a positive constant that may depend on $\lambda$ but not on $j,y',v',y$, or $v$.
\end{proposition}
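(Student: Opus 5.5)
The variance of the increment equals a deterministic sum over $k$, because the $\varepsilon_{jk}$ are independent standard Gaussians. Writing $\Phi=\partial_y^m\partial_v^n\Psi$, we have
\begin{align*}
	\expect[\{\partial_y^m\partial_v^n B_j(y',v')-\partial_y^m\partial_v^n B_j(y,v)\}^2]
	=
		\sum_{k\in\Integers}
			\{\Phi(y'-k,v')-\Phi(y-k,v)\}^2 .
\end{align*}
This sum does not depend on $j$, which takes care of uniformity in $j$. We split the difference into a pure $y$-increment and a pure $v$-increment,
\begin{align*}
	\Phi(y'-k,v')-\Phi(y-k,v)
	=
		\{\Phi(y'-k,v')-\Phi(y-k,v')\}
		+
		\{\Phi(y-k,v')-\Phi(y-k,v)\},
\end{align*}
and use $(a+b)^2\le 2a^2+2b^2$ to treat each part separately.

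\emph{The $v$-part.} By the fundamental theorem of calculus in $v$ (as in \cref{cor5984105235}) and the decay bound of \pref{prop843092123} applied to $\partial_y^m\partial_v^{n+1}\Psi$, we get
\[
|\Phi(y-k,v')-\Phi(y-k,v)|\le c|v'-v|(3+|y-k|)^{-1},
\]
since the segment joining $v$ and $v'$ stays in the interval $K$. Summing the squares over $k$ and using the uniform bound on $\sum_k(3+|y-k|)^{-2}$ (\lref{lemma849208094238902}, as in \pref{prop4980239012423}) gives a bound $c|v'-v|^2$.

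\emph{The $y$-part.} This is the main point: we need a bound of order $|y'-y|^{2\lambda}$ that is uniform in $y$ and does not blow up for large $|y'-y|$. Two estimates are available for each term. First, the mean value theorem in $y$ with \pref{prop843092123} gives
\[
|\Phi(y'-k,v')-\Phi(y-k,v')|\le c|y'-y|\sup_{\theta\in[0,1]}(3+|y+\theta(y'-y)-k|)^{-L}.
\]
Second, the triangle inequality gives $|\Phi(y'-k,v')-\Phi(y-k,v')|\le c\{(3+|y'-k|)^{-L}+(3+|y-k|)^{-L}\}$. We interpolate between them using $\min\{a,b\}\le a^{\lambda}b^{1-\lambda}$.

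If $|y'-y|\le1$, the supremum in the first estimate is at most $c(3+|y-k|)^{-L}$ with a new constant. Summing the squares gives $c|y'-y|^2\le c|y'-y|^{2\lambda}$.

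If $|y'-y|>1$, the second estimate together with \lref{lemma849208094238902} bounds the sum by a constant, which is at most $c|y'-y|^{2\lambda}$.

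Combining the two cases gives the $y$-part bound $c|y'-y|^{2\lambda}$. In fact this argument works for any $\lambda\le1$, with a constant depending only on $K$, $m$ and $n$. Adding the $y$-part and the $v$-part yields the claimed estimate, with a constant independent of $j,y',v',y,v$.
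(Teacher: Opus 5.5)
Your proof is correct. Its skeleton matches the paper's: the variance is a sum over $k$ of squared increments, it is split into a $y$-increment and a $v$-increment, and the $v$-part is handled by the fundamental theorem of calculus, \pref{prop843092123} and \lref{lemma849208094238902}.

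The difference is in the $y$-part. Write $\Delta_k$ for the $k$th $y$-increment. The paper does not split into cases. It writes $|\Delta_k|^2=|\Delta_k|^{2-2\lambda}|\Delta_k|^{2\lambda}$, bounds the first factor by the decay estimate, and bounds the second by the global Lipschitz bound $c|y'-y|$ of \corref{cor5984105235}, which has no decay in $k$. Summability must then come entirely from the first factor, so the paper needs $(2-2\lambda)L>1$. This forces $\lambda<1$ and makes the constant depend on $\lambda$ through $L$. That argument is exactly the interpolation $\min\{a,b\}\le a^{\lambda}b^{1-\lambda}$ you announce; your case analysis replaces it, so that sentence can be dropped.

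Your case split keeps the decay inside the Lipschitz estimate. For $|y'-y|\le 1$ the segment stays within distance $1$ of $y$, so $3+|y+\theta(y'-y)-k|\ge\frac{2}{3}(3+|y-k|)$. For $|y'-y|>1$ the sum is simply bounded. Together these give the sharper bound $c\min\{1,|y'-y|^2\}$ for the $y$-part. Hence the claim holds for every $\lambda\in(0,1]$, with a constant depending only on $K$, $m$, $n$, at the cost of a two-case argument. The paper's route is a one-line pointwise interpolation but yields only $\lambda<1$ with a $\lambda$-dependent constant. For the later uses (\pref{prop9842309340} and \lref{lemma8453908409128}, where $\lambda<\frac{1}{2}$) either version suffices.
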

\begin{proof}
	First of all, the H{\"o}lder inequality implies
	\begin{align*}
		\expect[\{\partial_y^m\partial_v^n B_j(y',v')-\partial_y^m\partial_v^n B_j(y,v)\}^2]^{\frac{1}{2}}
		&\leq
			\expect[\{\partial_y^m\partial_v^n B_j(y',v')-\partial_y^m\partial_v^n B_j(y,v')\}^2]^{\frac{1}{2}}\\
		&\phantom{=}\qquad
			+
			\expect[\{\partial_y^m\partial_v^n B_j(y,v')-\partial_y^m\partial_v^n B_j(y,v)\}^2]^{\frac{1}{2}}.
	\end{align*}
	We estimate the two terms on the right-hand side separately, starting with the second term:
	\begin{multline*}
		\expect[\{\partial_y^m\partial_v^n B_j(y,v')-\partial_y^m\partial_v^n B_j(y,v)\}^2]
		=
			\sum_{k\in\Integers}
				\{
					\partial_y^m\partial_v^n
						\Psi(y-k,v')
					-
					\partial_y^m\partial_v^n
						\Psi(y-k,v)
				\}^2\\
		\begin{aligned}
			&=
				\sum_{k\in\Integers}
					\left\{
						(v'-v)
						\int_0^1
							\partial_y^m\partial_v^{n+1}
								\Psi(y-k,\theta v'+(1-\theta)v)\,
								d\theta
					\right\}^2\\
			&\leq
				|v'-v|^2
				\sum_{k\in\Integers}
					\{
						c(3+|y-k|)^{-1}
					\}^2.
		\end{aligned}
	\end{multline*}
	Here we used \pref{prop843092123}.
	This and \lref{lemma849208094238902}
	give an estimate of the second term.
	
	Next we give an estimate of the first term.
	We have
	\begin{align*}
		\expect[\{\partial_y^m\partial_v^n B_j(y',v')-\partial_y^m\partial_v^n B_j(y,v')\}^2]
		=
			\sum_k
				\{
					\partial_y^m\partial_v^n
						\Psi(y'-k,v')
					-
					\partial_y^m\partial_v^n
						\Psi(y-k,v')
				\}^2.
	\end{align*}
	\pref{prop843092123} implies
	\begin{align*}
		|
			\partial_y^m\partial_v^n \Psi(y'-k,v')
			-
			\partial_y^m\partial_v^n \Psi(y-k,v')
		|
		&\leq
			|\partial_y^m\partial_v^n \Psi(y'-k,v')|
			+
			|\partial_y^m\partial_v^n \Psi(y-k,v')|\\
		&\leq
			c(3+|y'-k|)^{-L}
			+
			c(3+|y-k|)^{-L}.
	\end{align*}
	By choosing $L>1$ so that $(2-2\lambda)L>1$ and using this estimate and \corref{cor5984105235},
	we have
	\begin{multline*}
		|
			\partial_y^m\partial_v^n \Psi(y'-k,v')
			-
			\partial_y^m\partial_v^n \Psi(y-k,v')
		|^2\\
		\begin{aligned}
			&=
				|
					\partial_y^m\partial_v^n \Psi(y'-k,v')
					-
					\partial_y^m\partial_v^n \Psi(y-k,v')
				|^{2-2\lambda}\\
			&\phantom{=}\qquad\quad
				\times
				|
					\partial_y^m\partial_v^n \Psi(y'-k,v')
					-
					\partial_y^m\partial_v^n \Psi(y-k,v')
				|^{2\lambda}\\
			&\leq
			\left\{
				\frac{c}{\{3+|y'-k|\}^L}
				+
				\frac{c}{\{3+|y-k|\}^L}
			\right\}^{2-2\lambda}
			\{
				c
				|y'-y|
			\}^{2\lambda}\\
		&\leq
			c
			\left\{
				\frac{1}{\{3+|y'-k|\}^{(2-2\lambda)L}}
				+
				\frac{1}{\{3+|y-k|\}^{(2-2\lambda)L}}
			\right\}
			|y'-y|^{2\lambda}.
		\end{aligned}
	\end{multline*}
	Hence, with the help of \lref{lemma849208094238902},
	we arrive at the assertion.
\end{proof}

\begin{proposition}\label{prop32132183190}
	Let $m,n,m',n'\in\NaturalNum\cup\{0\}$.
	Then for all $(y,v),(y',v')\in\RealNum\times K$ with $y\neq y'$, we have
	\begin{align*}
		|
			\expect
				[
					\partial_y^m\partial_v^n B_j(y,v)
					\partial_y^{m'}\partial_v^{n'} B_j(y',v')
				]
			|
		\leq
			c
			|y-y'|^{-L}.
	\end{align*}
	Here, $L$ is an arbitrary positive constant,
	and $c$ is a positive constant that may depend on $L$ but not on $j,y,v,y'$, or $v'$.
\end{proposition}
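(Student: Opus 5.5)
The first step is to write the covariance explicitly using the independence of the $\varepsilon_{jk}$. Since $\expect[\varepsilon_{jk}\varepsilon_{jk'}]=\indicator_{\{k=k'\}}$ and the series defining $\partial_y^m\partial_v^n B_j$ converges in $L^2$ (by \pref{prop843092123}), we have
\begin{align*}
	\expect
		[
			\partial_y^m\partial_v^n B_j(y,v)
			\partial_y^{m'}\partial_v^{n'} B_j(y',v')
		]
	=
		\sum_{k\in\Integers}
			\partial_y^m\partial_v^n\Psi(y-k,v)\,
			\partial_y^{m'}\partial_v^{n'}\Psi(y'-k,v').
\end{align*}
This expression does not depend on $j$, which explains the uniformity in $j$. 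Next, apply \pref{prop843092123} with a large exponent $M$ (to be fixed later) to both factors. The absolute value of the covariance is then bounded by
\begin{align*}
	c\sum_{k\in\Integers}(3+|y-k|)^{-M}(3+|y'-k|)^{-M},
\end{align*}
where $c$ is uniform in $v,v'\in K$.

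The remaining task is the elementary decay estimate for this discrete convolution. By the triangle inequality, $|y-y'|\leq|y-k|+|y'-k|$, so $\max\{|y-k|,|y'-k|\}\geq|y-y'|/2$. For each $k$ we therefore bound one factor by $(3+|y-y'|/2)^{-M}$ and keep the other factor. This gives
\begin{align*}
	\sum_{k}(3+|y-k|)^{-M}(3+|y'-k|)^{-M}
	\leq
		(3+|y-y'|/2)^{-M}
		\sum_k\{(3+|y-k|)^{-M}+(3+|y'-k|)^{-M}\}.
\end{align*}
Here we used that $(3+|\cdot|)^{-M}\leq 3^{-M}\leq 1$ to discard the kept factor appropriately; more precisely, the product is at most the small factor times $1$, and we sum it over $k$ according to which of the two distances is large. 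For $M>1$, the sums $\sum_k(3+|y-k|)^{-M}$ are bounded uniformly in $y$, by \lref{lemma849208094238902} (the appendix lemma already used in \pref{prop4980239012423}) or simply by comparison with an integral.

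To finish, take $M=\max\{L,2\}$. Then $(3+|y-y'|/2)^{-M}\leq 2^{L}|y-y'|^{-L}$ when $|y-y'|\geq 1$. When $|y-y'|<1$ we instead use the trivial uniform bound (Cauchy--Schwarz together with \pref{prop4980239012423}): the covariance is bounded by a constant, and that constant is at most $c|y-y'|^{-L}$ in this range. Note also that $(3+x/2)^{-M}\leq c_L x^{-L}$ holds for all $x>0$ directly, so no case split is really needed.

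The main obstacle is the splitting step. We must make sure the bound on the sum over $k$ is uniform in $y,y'$, and that the summation step splits $k$ correctly into the set where $|y-k|\geq|y-y'|/2$ and its complement. Everything else follows directly from \pref{prop843092123}.
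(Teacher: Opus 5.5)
Your argument is correct. It differs from the paper only in how the lattice sum $\sum_k(3+|y-k|)^{-M}(3+|y'-k|)^{-M}$ is controlled.

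The paper applies \pref{prop843092123} with exponent $\kappa+L$, where $\kappa+L>1$. It then appeals to the separate \lref{lem483901134}, whose proof normalizes to $0\le x<1$, $x<y$ and treats four ranges of $k$ by hand: $k\le 0$; $1\le k\le\floor{(y-x)/2}$; $\floor{(y-x)/2}<k\le\floor{y}$; and $k>\floor{y}$.

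You split $k$ into just two sets, according to which of $|y-k|$, $|y'-k|$ is at least $|y-y'|/2$. You bound that factor by $(3+|y-y'|/2)^{-M}$ and sum the other factor with \lref{lemma849208094238902}. This is shorter and makes \lref{lem483901134} unnecessary. It is also marginally sharper:
\begin{itemize}
\item decay $M$ on each factor gives decay $M$ in $|y-y'|$, whereas the paper's lemma yields only $L=(\kappa+L)-\kappa$;
\item your intermediate bound $c(3+|y-y'|/2)^{-M}$ stays bounded near the diagonal, so it implies the stated estimate with room to spare.
\end{itemize}
Since \pref{prop843092123} supplies arbitrary polynomial decay, this extra sharpness is not needed here, and both routes are equally valid.

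One expository fix: delete the sentence about bounding the kept factor by $1$. If you literally replaced that factor by $1$, the sum over $k$ would diverge. What you actually use is only that the kept factor is at most $(3+|y-k|)^{-M}+(3+|y'-k|)^{-M}$. That sum is bounded uniformly in $y,y'$ because $M>1$, and it is exactly what your displayed inequality records.
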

\begin{proof}
	We have
	\begin{align*}
		\expect
			[
				\partial_y^m\partial_v^n B_j(y,v)
				\partial_y^{m'}\partial_v^{n'} B_j(y',v')
			]
		&=
			\sum_{k\in\Integers}
				\partial_y^m\partial_v^n \Psi(y-k,v)
				\partial_y^{m'}\partial_v^{n'} \Psi(y'-k,v').
	\end{align*}
	Here we choose $\kappa>0$ so that $\kappa+L>1$.
	\pref{prop843092123} yields
	\begin{align*}
		|
			\expect
				[
					\partial_y^m\partial_v^n B_j(y,v)
					\partial_y^{m'}\partial_v^{n'} B_j(y',v')
				]
		|
		&\leq
			c
			\sum_{k\in\Integers}
				(3+|y-k|)^{-\kappa-L}
				(3+|y'-k|)^{-\kappa-L}.
	\end{align*}
	This and \lref{lem483901134} complete the proof.
\end{proof}

\subsection{Estimates of $\tilde{A}_j$}
We collect estimates of $\tilde{A}_j$ ($j\in\Integers$).
In the following assertions, 
the parameters $u$ and $v$ run over compact intervals
$I=[0,T]$ and  $K\subset(0,1)$, respectively.
Here $T$ is a fixed positive number.
\begin{proposition}\label{prop493024920921}
	Let $m,n\in\NaturalNum\cup\{0\}$.
	For all $(u,v)\in I\times K$, we have
	\begin{align*}
		\expect[\{\partial_u^m\partial_v^n \tilde{A}_j(u,v)\}^2]
		&\leq
			c
			(|j|+1)^{2n}
			2^{2j(m-v)}.
	\end{align*}
	Here $c$ is a positive constant independent of $u$, $v$, and $j$.
\end{proposition}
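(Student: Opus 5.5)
The plan is to differentiate $\tilde{A}_j(u,v)=2^{-jv}B_j(2^ju,v)$ explicitly by the Leibniz rule and then bound each resulting term with \pref{prop4980239012423}.

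First, differentiate in $u$. Each $\partial_u$ falls on the second slot $2^ju$ of $B_j$ and produces a factor $2^j$, so
\begin{align*}
	\partial_u^m\tilde{A}_j(u,v)
	=
		2^{jm}2^{-jv}(\partial_y^mB_j)(2^ju,v).
\end{align*}
Next, apply $\partial_v^n$ to this product using the Leibniz rule. The factor $2^{-jv}$ has $\partial_v^{l}2^{-jv}=(-j\log 2)^{l}2^{-jv}$. The factor $(\partial_y^mB_j)(2^ju,v)$ depends on $v$ only through its second argument, since $2^ju$ does not involve $v$. Hence
\begin{align*}
	\partial_u^m\partial_v^n\tilde{A}_j(u,v)
	=
		2^{j(m-v)}
		\sum_{l=0}^n
			\binom{n}{l}
			(-j\log 2)^{l}
			(\partial_y^m\partial_v^{n-l}B_j)(2^ju,v).
\end{align*}
The interchange of derivatives with the series defining $B_j$ is justified by the smoothness recalled after \eqref{eq493209201} together with \prefs{prop843092123}{prop4980239012423}{prop483920184029841}; the termwise differentiated series converges in $L^2$ uniformly, and this is the only point needing care.

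Then square, take expectations, and use Minkowski's inequality (or $(\sum_{l=0}^n a_l)^2\le (n+1)\sum_{l=0}^n a_l^2$):
\begin{align*}
	\expect[\{\partial_u^m\partial_v^n\tilde{A}_j(u,v)\}^2]
	\le
		(n+1)\,2^{2j(m-v)}
		\sum_{l=0}^n
			\binom{n}{l}^2(|j|\log 2)^{2l}
			\expect[\{(\partial_y^m\partial_v^{n-l}B_j)(2^ju,v)\}^2].
\end{align*}
By \pref{prop4980239012423}, each expectation is bounded uniformly in $j$, in $y=2^ju\in\RealNum$, and in $v\in K$. Since $|j|^{2l}\le(|j|+1)^{2n}$ for $0\le l\le n$, the right-hand side is at most $c(|j|+1)^{2n}2^{2j(m-v)}$, which is the claim.

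The key point is that the uniform-in-$y$ bound of \pref{prop4980239012423} makes the rescaling $y=2^ju$ harmless. So the whole $j$-dependence comes explicitly from the chain rule in $u$, giving $2^{jm}$, and from differentiating $2^{-jv}$ in $v$, giving the polynomial factor $(|j|+1)^{n}$.
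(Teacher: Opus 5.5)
Your proposal is correct and follows essentially the same route as the paper: the Leibniz-rule expansion $\partial_u^m\partial_v^n\tilde{A}_j(u,v)=2^{j(m-v)}\sum_{r=0}^n\binom{n}{r}(-j\log 2)^r\,\partial_y^m\partial_v^{n-r}B_j(2^ju,v)$, followed by Minkowski's inequality and the bound of \pref{prop4980239012423}, which is uniform in $j$, $y$ and $v$. One small slip: $2^ju$ is the first argument of $B_j$, not the second.
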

\begin{proof}
	Note $\partial_u^m \tilde{A}_j(u,v)=2^{j(m-v)} (\partial_y^m B_j)(2^ju,v)$.
	Here, $\partial_y$ denotes the differentiation of $B_j$ in the first variable.
	Hence, we have
	\begin{align*}
		\partial_u^m \partial_v^n \tilde{A}_j(u,v)
		&=
			\sum_{r=0}^n
				\binom{n}{r}
					(-\log 2)^r
					j^r
					2^{j(m-v)}
					\partial_y^m\partial_v^{n-r}B_j(2^ju,v).
	\end{align*}
	This implies
	\begin{align*}
		\expect[\{\partial_u^m \partial_v^n \tilde{A}_j(u,v)\}^2]^{\frac{1}{2}}
		&\leq
			c
			(|j|+1)^n
			2^{j(m-v)}
			\sup_{0\leq r\leq n}
				\expect
					[
						\{
							\partial_y^m \partial_v^r B_j(2^ju,v)
						\}^2
					]^{\frac{1}{2}}.
	\end{align*}
	Combining this estimate and \pref{prop4980239012423},
	we arrive at the conclusion.
\end{proof}

\begin{proposition}\label{prop9842309340}
	Let $n\in\NaturalNum\cup\{0\}$.
	For $u',u\in I$, and $v\in K$, we have
	\begin{align*}
		\expect[\{\partial_v^n \tilde{A}_j(u',v)-\partial_v^n \tilde{A}_j(u,v)\}^2]
		&\leq
			c
			(|j|+1)^{2n}
			2^{-2j(v-\lambda)}
			|u'-u|^{2\lambda}.
	\end{align*}
	Here, $0<\lambda<1$ is an arbitrary constant,
	and $c$ is a positive constant that may depend on $\lambda$
	but not on $u'$, $u$, $v$ or $j$.
\end{proposition}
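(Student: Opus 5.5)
We reduce to the bounds on $B_j$ already obtained, as in the proof of \pref{prop493024920921}. Differentiating $\tilde{A}_j(u,v)=2^{-jv}B_j(2^ju,v)$ $n$ times in $v$ by the Leibniz rule gives
\begin{align*}
	\partial_v^n\tilde{A}_j(u,v)
	=
		\sum_{r=0}^n
			\binom{n}{r}
			(-\log 2)^r
			j^r
			2^{-jv}
			\partial_v^{n-r}B_j(2^ju,v).
\end{align*}
Subtracting this identity at $u'$ and at $u$, with the same $v$, leaves only differences of $B_j$ in the first variable. The Minkowski inequality in $L^2(\Omega)$ then yields
\begin{align*}
	\expect[\{\partial_v^n \tilde{A}_j(u',v)-\partial_v^n \tilde{A}_j(u,v)\}^2]^{\frac12}
	\leq
		c(|j|+1)^n2^{-jv}
		\sup_{0\leq r\leq n}
			\expect[\{\partial_v^{r}B_j(2^ju',v)-\partial_v^{r}B_j(2^ju,v)\}^2]^{\frac12}.
\end{align*}

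Next we apply \pref{prop483920184029841} with $m=0$, $y'=2^ju'$, $y=2^ju$, and $v'=v$. The term $|v'-v|^2$ drops out, so each expectation is bounded by $c|2^ju'-2^ju|^{2\lambda}=c\,2^{2j\lambda}|u'-u|^{2\lambda}$. The constant is uniform in $j$ and in $y,y'\in\RealNum$, which matters because $2^ju$ ranges over all of $\RealNum$ as $j$ varies. Squaring and combining gives
\begin{align*}
	c(|j|+1)^{2n}2^{-2jv}2^{2j\lambda}|u'-u|^{2\lambda}
	=
	c(|j|+1)^{2n}2^{-2j(v-\lambda)}|u'-u|^{2\lambda},
\end{align*}
which is the claim.

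The main point to check is uniformity. The constant in \pref{prop483920184029841} must depend only on $\lambda$, $r\leq n$ and $K$, and not on $j$ or the location $y$. Its proof provides this through the decay bound of \pref{prop843092123} and the summation lemma applied to $\sum_k(3+|y-k|)^{-(2-2\lambda)L}$. Since $K\subset(0,1)$ is compact, the constant is also uniform in $v\in K$, and finitely many $r$ appear, so taking the maximum over $r$ is harmless.
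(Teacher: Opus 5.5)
Your proposal is correct and follows the paper's proof: the paper also uses the Leibniz expansion from \pref{prop493024920921} to bound the left-hand side by $c(|j|+1)^{2n}2^{-2jv}\sup_{0\le r\le n}\expect[\{\partial_v^rB_j(2^ju',v)-\partial_v^rB_j(2^ju,v)\}^2]$, and then concludes with \pref{prop483920184029841} at $v'=v$. Your explicit check that the constant in \pref{prop483920184029841} is uniform in $j$ and in the location $y=2^ju$ is the point that makes this work for every $j\in\Integers$, which the paper leaves implicit.
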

\begin{proof}
	We have
	\begin{align*}
		\expect[\{\partial_v^n \tilde{A}_j(u',v)-\partial_v^n \tilde{A}_j(u,v)\}^2]
		&\leq
			c
			(|j|+1)^{2n}
			2^{-2jv}\\
		&\phantom{\leq}
			\qquad
			\times
			\sup_{0\leq r\leq n}
				\expect[
					\{
						\partial_v^r B_j(2^ju',v)
						-
						\partial_v^r B_j(2^ju,v)
					\}^2].
	\end{align*}
	Combining this estimate and \pref{prop483920184029841},
	we arrive at the conclusion.
\end{proof}

\begin{proposition}\label{prop438214234143}
	Write $\lambda_\ast=\min K$.
	Let $n\in\NaturalNum\cup\{0\}$.
	Assume that $j\geq 1$.
	For $u\in I$, and $v',v\in K$, we have
	\begin{align*}
		\expect[\{\partial_v^n \tilde{A}_j(u,v')-\partial_v^n \tilde{A}_j(u,v)\}^2]
		&\leq
			c
			(j+1)^{2(n+1)}
			2^{-2j \lambda_\ast}
			|v'-v|^2.
	\end{align*}
	Here $c$ is a positive constant independent of $u$, $v$, $v'$, and $j$.
\end{proposition}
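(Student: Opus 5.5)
Since $\partial_v^n\tilde{A}_j$ is smooth in $v$, we use the fundamental theorem of calculus in $v$:
\begin{align*}
	\partial_v^n \tilde{A}_j(u,v')-\partial_v^n \tilde{A}_j(u,v)
	=
		(v'-v)
		\int_0^1
			\partial_v^{n+1}\tilde{A}_j(u,\theta v'+(1-\theta)v)\,
			d\theta.
\end{align*}
Because $K$ is an interval, the point $v_\theta=\theta v'+(1-\theta)v$ stays in $K$. Minkowski's inequality for the $d\theta$-integral (in the $L^2(\Omega)$ norm) then gives
\begin{align*}
	\expect[\{\partial_v^n \tilde{A}_j(u,v')-\partial_v^n \tilde{A}_j(u,v)\}^2]^{\frac{1}{2}}
	\leq
		|v'-v|
		\sup_{w\in K}
			\expect[\{\partial_v^{n+1}\tilde{A}_j(u,w)\}^2]^{\frac{1}{2}}.
\end{align*}

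Next we apply \pref{prop493024920921} with $m=0$ and $n$ replaced by $n+1$. This yields
\begin{align*}
	\expect[\{\partial_v^{n+1}\tilde{A}_j(u,w)\}^2]
	\leq
		c(|j|+1)^{2(n+1)}2^{-2jw},
\end{align*}
with $c$ uniform in $u\in I$ and $w\in K$. For $j\geq 1$ and $w\geq\lambda_\ast=\min K$ we have $2^{-2jw}\leq 2^{-2j\lambda_\ast}$, and $|j|+1=j+1$. Squaring the previous display gives the claimed bound $c(j+1)^{2(n+1)}2^{-2j\lambda_\ast}|v'-v|^2$.

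The only point requiring care is the interchange of differentiation in $v$ with the series defining $B_j$, and the measurability needed to apply Minkowski's inequality to the random integrand. This is justified by the almost sure smoothness of $\tilde{A}_j$ recalled after \eqref{eq493209201}. Alternatively, one can compute directly: $\partial_v^n\tilde{A}_j(u,v')-\partial_v^n\tilde{A}_j(u,v)$ is a Gaussian series in $\varepsilon_{jk}$ whose variance is a sum over $k$. Expanding it with the Leibniz formula from the proof of \pref{prop493024920921} and applying the mean value theorem to each $k$-coefficient reduces everything to \pref{prop843092123} and \lref{lemma849208094238902}. Either way, the restriction $j\geq1$ is used exactly to replace $2^{-2jw}$ by $2^{-2j\lambda_\ast}$.
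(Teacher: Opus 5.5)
Your proposal is correct and follows the paper's proof: the fundamental theorem of calculus in $v$, then \pref{prop493024920921} with $m=0$ and $n+1$ in place of $n$, using $j\geq 1$ to bound $2^{-2jw}$ by $2^{-2j\lambda_\ast}$. You also make explicit the Minkowski step and the fact that $\theta v'+(1-\theta)v$ stays in the interval $K$, which the paper leaves implicit.
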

\begin{proof}
	Note
	\begin{align*}
		\partial_v^n \tilde{A}_j(u,v')-\partial_v^n \tilde{A}_j(u,v)
		=
			(v'-v)
			\int_0^1
				\partial_v^{n+1} \tilde{A}_j(u,\theta v'+(1-\theta)v)\,
				d\theta.
	\end{align*}
	Using \pref{prop493024920921}, we see the conclusion.
\end{proof}

\begin{proposition}\label{prop49320431941}
	Write $\lambda_\ast=\min K$ and choose $0<\lambda<\min\{\frac{1}{2},\lambda_\ast\}$.
	Assume that $j\geq 1$.
	For all $u_1,u_2\in I$, $v_1,v_1',v_2,v_2'\in K$, 
	and $\epsilon>0$ with $\epsilon<u_2-u_1$ and $u_1+\epsilon,u_2+\epsilon\in I$,
	we have
	\begin{multline*}
		|
			\expect
				[
					\{\tilde{A}_j(u_1+\epsilon,v_1')-\tilde{A}_j(u_1,v_1)\}
					\{\tilde{A}_j(u_2+\epsilon,v_2')-\tilde{A}_j(u_2,v_2)\}
				]
		|\\
		\leq
			c
			2^{-\kappa j}
			(u_2-u_1)^{2\lambda-2}
			\{|v_1'-v_1|+\epsilon\}
			\{|v_2'-v_2|+\epsilon\}.
	\end{multline*}
	Here $c$ and $\kappa$ are positive constants
	independent of $u_1$, $u_2$, $v_1$, $v_1'$, $v_2$, $v_2'$, $\epsilon$, and $j$.
\end{proposition}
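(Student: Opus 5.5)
We expand the covariance through the series representation. Write
\begin{align*}
	D_i
	=
		\tilde{A}_j(u_i+\epsilon,v_i')-\tilde{A}_j(u_i,v_i)
	=
		\{\tilde{A}_j(u_i+\epsilon,v_i')-\tilde{A}_j(u_i+\epsilon,v_i)\}
		+\{\tilde{A}_j(u_i+\epsilon,v_i)-\tilde{A}_j(u_i,v_i)\}
\end{align*}
for $i=1,2$. Each bracket can be written as an integral of a derivative of $\tilde{A}_j$. The first bracket is $(v_i'-v_i)\int_0^1\partial_v\tilde{A}_j(u_i+\epsilon,\theta v_i'+(1-\theta)v_i)\,d\theta$. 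The second is $\epsilon\int_0^1\partial_u\tilde{A}_j(u_i+\theta\epsilon,v_i)\,d\theta$. Hence $\expect[D_1D_2]$ is a sum of four terms. Each has the form (prefactor $\{|v_1'-v_1|$ or $\epsilon\}\times\{|v_2'-v_2|$ or $\epsilon\}$) times an average over $\theta,\theta'\in[0,1]$ of $\expect[\partial^{a}\tilde{A}_j(w_1,\cdot)\,\partial^{b}\tilde{A}_j(w_2,\cdot)]$, where $\partial^a,\partial^b\in\{\partial_u,\partial_v\}$ and $w_i\in[u_i,u_i+\epsilon]$. It therefore suffices to show, uniformly in the intermediate points, that
\begin{align*}
	|\expect[\partial^{a}\tilde{A}_j(w_1,\tilde v_1)\,\partial^{b}\tilde{A}_j(w_2,\tilde v_2)]|
	\leq
		c\,2^{-\kappa j}(u_2-u_1)^{2\lambda-2}.
\end{align*}

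To bound each mixed moment, we expand $\partial_u\tilde{A}_j=2^{j(1-v)}\partial_yB_j(2^ju,v)$ and $\partial_v\tilde{A}_j=2^{-jv}\{-j\log 2\,B_j+\partial_vB_j\}(2^ju,v)$, as in the proof of \pref{prop493024920921}. Each mixed moment is then a finite combination of $\expect[\partial_y^{m}\partial_v^{n}B_j(2^jw_1,\tilde v_1)\,\partial_y^{m'}\partial_v^{n'}B_j(2^jw_2,\tilde v_2)]$ with $m,m'\le1$. The coefficients are bounded by $c(j+1)^2 2^{j(2-2\lambda_\ast)}$, using $\tilde v_i\ge\lambda_\ast$ and $j\ge1$. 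We bound the $B_j$ moment in two ways and interpolate. The first bound is by Cauchy--Schwarz and \pref{prop4980239012423}: it is at most $c$. The second is by \pref{prop32132183190} with large $L$: it is at most $c\,2^{-jL}|w_2-w_1|^{-L}$. Taking the geometric mean with weights $1-\mu$ and $\mu$ gives the bound $c\,2^{-j\mu L}|w_2-w_1|^{-\mu L}$. We choose $\mu L=2-2\lambda$; since $L$ is arbitrary, this is possible with $\mu\in(0,1)$. The $j$-dependence then becomes $(j+1)^2 2^{j(2-2\lambda_\ast)}2^{-j(2-2\lambda)}=(j+1)^22^{-2j(\lambda_\ast-\lambda)}$, which is at most $c\,2^{-\kappa j}$ for any $0<\kappa<2(\lambda_\ast-\lambda)$. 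This is where $\lambda<\lambda_\ast$ is used.

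The main obstacle is the distance factor $|w_2-w_1|^{-(2-2\lambda)}$. Since $w_i\in[u_i,u_i+\epsilon]$ and $\epsilon<u_2-u_1$, the gap $w_2-w_1$ may be much smaller than $u_2-u_1$; it could be nearly $0$ when $\epsilon$ is close to $u_2-u_1$. So we split into cases. If $\epsilon\le(u_2-u_1)/2$, then $w_2-w_1\ge u_2-u_1-\epsilon\ge(u_2-u_1)/2$, and the argument above works directly. If $\epsilon>(u_2-u_1)/2$, we abandon the integral representation and use only \pref{prop9842309340} and \pref{prop438214234143}, with Cauchy--Schwarz. These give $\expect[D_i^2]^{1/2}\le c(j+1)2^{-j(\lambda_\ast-\lambda')}\{\epsilon^{\lambda'}+|v_i'-v_i|\}$ for some $\lambda<\lambda'<\lambda_\ast$. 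In this regime $\epsilon^{\lambda'}\le c\,\epsilon\,(u_2-u_1)^{\lambda'-1}$ and $(u_2-u_1)^{\lambda'-1}\le c(u_2-u_1)^{\lambda-1}$ on the bounded interval $I$, because $\lambda'>\lambda$. Likewise $|v_i'-v_i|\le c\,|v_i'-v_i|(u_2-u_1)^{\lambda-1}$, since $u_2-u_1\le T$ and $\lambda<1$. Multiplying the two factors gives the claimed bound, with $\kappa$ taken as the smaller of the exponents from the two cases.
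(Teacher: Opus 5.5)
Your proof is correct, but it handles the near-diagonal singularity differently from the paper. The paper applies the fundamental theorem of calculus along the straight segment from $(u_i,v_i)$ to $(u_i+\epsilon,v_i')$ and bounds all four mixed moments by Proposition~\ref{prop32132183190} with exponent $2L=2-2\lambda$. This gives the kernel $(u_2-u_1)^{2\lambda-2}\{1+a(\theta_2-\theta_1)\}^{-2L}$ with $a=\epsilon/(u_2-u_1)<1$. The paper never splits into cases. Instead it shows by explicit integration that $\int_{[0,1]^2}\{1+a(\theta_2-\theta_1)\}^{-2L}\,d\theta_1\,d\theta_2$ is bounded uniformly in $a$; this works because the corner singularity at $a\to1$ is integrable in two variables when $2L<2$.

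You use an L-shaped path and apply the decay estimate only when $\epsilon\le(u_2-u_1)/2$, where $w_2-w_1\ge(u_2-u_1)/2$. In the regime $\epsilon>(u_2-u_1)/2$ you observe that the required off-diagonal bound is no stronger than the diagonal one. Cauchy--Schwarz with Propositions~\ref{prop9842309340} and~\ref{prop438214234143} then suffices, via $\epsilon^{\lambda'}\le c\,\epsilon\,(u_2-u_1)^{\lambda-1}$.

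Your route avoids the explicit $\theta$-integral and uses only $\lambda<\lambda_\ast$, not $\lambda<\frac12$. The paper's single uniform computation gives exactly $\kappa=2(\lambda_\ast-\lambda)$. Your crude common coefficient bound $(j+1)^2 2^{j(2-2\lambda_\ast)}$ forces $\kappa$ strictly below that, which is immaterial for the later summation over $j$.

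Finally, your interpolation step is harmless but unnecessary. Proposition~\ref{prop32132183190} already holds for every $L>0$, so you can take $L=2-2\lambda$ directly.
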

\begin{proof}
	For notational simplicity, we write 
	$u_i(\theta_i)=\theta_i (u_i+\epsilon)+(1-\theta_i)u_i=u_i+\theta_i \epsilon$
	and $v_i(\theta_i)=\theta_i v_i'+(1-\theta_i)v_i=v_i+\theta_i(v_i'-v_i)$ for $i=1,2$.
	Since the fundamental theorem of calculus implies
	\begin{align*}
		\tilde{A}_j(u_i+\epsilon,v_i')-\tilde{A}_j(u_i,v_i)
		&=
			\int_0^1
				d\theta_i\,
				\{
					\epsilon
					\partial_u \tilde{A}_j(u_i(\theta_i),v_i(\theta_i))
					+
					(v_i'-v_i)
					\partial_v \tilde{A}_j(u_i(\theta_i),v_i(\theta_i))
				\},
	\end{align*}
	we have
	\begin{multline*}
		\expect
			[
				\{\tilde{A}_j(u_1+\epsilon,v_1')-\tilde{A}_j(u_1,v_1)\}
				\{\tilde{A}_j(u_2+\epsilon,v_2')-\tilde{A}_j(u_2,v_2)\}
			]\\
			=
				\int_0^1
					d\theta_1
				\int_0^1
					d\theta_2\,
					\{
						\epsilon^2
						\Xi_{uu}(\theta_1,\theta_2)
						+
						\epsilon(v_2'-v_2)
						\Xi_{uv}(\theta_1,\theta_2)\\
						+
						(v_1'-v_1)\epsilon
						\Xi_{vu}(\theta_1,\theta_2)
						+
						(v_1'-v_1)(v_2'-v_2)
						\Xi_{vv}(\theta_1,\theta_2)
					\}.
	\end{multline*}
	Here 
	\begin{align}
		\label{eq849028091843201941}
		\Xi_{xy}(\theta_1,\theta_2)
		&=
			\expect
				[
					\partial_x \tilde{A}_j(u_1(\theta_1),v_1(\theta_1))
					\partial_y \tilde{A}_j(u_2(\theta_2),v_2(\theta_2))
				],		
	\end{align}
	where $x$ and $y$ denote the variables for differentiation.
	Then for $L>0$, we see
	\begin{align}
		\label{eq83821083201}
		|\Xi_{uu}(\theta_1,\theta_2)|
		&\leq
			c
			2^{2j(1-\lambda_\ast-L)}
			(u_2-u_1)^{-2L}
			\{1+a(\theta_2-\theta_1)\}^{-2L},\\
		\label{eq83821083202}
		|\Xi_{uv}(\theta_1,\theta_2)|
		&\leq
			c
			(j+1)
			2^{j\{1-2\lambda_\ast-2L\}}
			(u_2-u_1)^{-2L}
			\{1+a(\theta_2-\theta_1)\}^{-2L},\\
		\label{eq83821083203}
		|\Xi_{vu}(\theta_1,\theta_2)|
		&\leq
			c
			(j+1)
			2^{j\{1-2\lambda_\ast-2L\}}
			(u_2-u_1)^{-2L}
			\{1+a(\theta_2-\theta_1)\}^{-2L},\\
		\label{eq83821083204}
		|\Xi_{vv}(\theta_1,\theta_2)|
		&\leq
			c
			(j+1)^{2}
			2^{-2j(\lambda_\ast+L)}
			(u_2-u_1)^{-2L}
			\{1+a(\theta_2-\theta_1)\}^{-2L},
	\end{align}
	where $a=\epsilon/(u_2-u_1)$.
	Here choose $L=1-\lambda>\frac{1}{2}$.
	Then, since $1<2L<2$, we have
	\begin{align*}
		\sup_{0<a<1}
			\int_{[0,1]^2}
				d\theta_1
				d\theta_2\,
				\{1+a(\theta_2-\theta_1)\}^{-2L}
		<
			\infty.
	\end{align*}
	Actually, for $0<a<\frac{1}{2}$, we have 
	$1+a(\theta_2-\theta_1)>1-\frac{1}{2}=\frac{1}{2}$,
	which implies
	\begin{align*}
		\int_{[0,1]^2}
			d\theta_1
			d\theta_2\,
			\{1+a(\theta_2-\theta_1)\}^{-2L}
		\leq
			2^{2L}.
	\end{align*}
	For $\frac{1}{2}\leq a<1$, direct computation yields
	\begin{align*}
		\int_{[0,1]^2}
			d\theta_1
			d\theta_2\,
			\{1+a(\theta_2-\theta_1)\}^{-2L}
		&=
			\frac{-1}{a^2(1-2L)(2-2L)}
			\left\{
				2
				-(1+a)^{2-2L}
				-(1-a)^{2-2L}
			\right\}\\
		&\leq
			\frac{1}{a^2|1-2L||2-2L|}
			\left\{
				2
				+(1+a)^{2-2L}
				+(1-a)^{2-2L}
			\right\}\\
		&\leq
			\frac{3+2^{2-2L}}{2^{-2}|1-2L||2-2L|}.
	\end{align*}
	Furthermore, putting $\kappa=2(\lambda_\ast-\lambda)$, we have
	\begin{align*}
		2^{2j(1-\lambda_\ast-L)}
		&=
			2^{-j\kappa},\\
		(j+1)
		2^{j\{1-2\lambda_\ast-2L\}}
		&=
			(j+1)
			2^{-j}
			2^{-\kappa j}\\
		(j+1)^{2}
		2^{-2j(\lambda_\ast+L)}
		&=
			(j+1)^2
			2^{-2j}
			2^{-\kappa j}.
	\end{align*}
	Hence, there exists a positive constant $c$ such that
	\begin{align*}
		\int_{[0,1]^2}
				d\theta_1
				d\theta_2\,
				|\Xi_{xy}(\theta_1,\theta_2)|
		&\leq
			c
			2^{-\kappa j}
			(u_2-u_1)^{2\lambda-2},
	\end{align*}
	which implies the assertion. 
	It remains to establish \eqref{eq83821083201}, \eqref{eq83821083202},
	\eqref{eq83821083203}, and \eqref{eq83821083204}.

	Proof of \eqref{eq83821083201}.
	By calculating \eqref{eq849028091843201941} with $x=y=u$,
	we have
	\begin{align*}
		\Xi_{uu}(\theta_1,\theta_2)
		&=
			2^{j\{1-v_1(\theta_1)\}}
			2^{j\{1-v_2(\theta_2)\}}
			\expect
				[
					\partial_y B_j( 2^j u_1(\theta_1), v_1(\theta_1) )
					\partial_y B_j( 2^j u_2(\theta_2), v_2(\theta_2) )
				].
	\end{align*}
	Here, $\partial_y$ denotes the differentiation of $B_j$ in the first variable.
	From \pref{prop32132183190}, we arrive at \eqref{eq83821083201} as follows:
	\begin{align*}
		|\Xi_{uu}(\theta_1,\theta_2)|
		&\leq
			2^{j(2-v_1(\theta_1)-v_2(\theta_2))}
			c|2^j u_1(\theta_1)-2^j u_2(\theta_2)|^{-2L}\\
		&\leq
			c
			2^{2j(1-\lambda_\ast)}
			2^{-2jL}
			| u_2-u_1 + \epsilon (\theta_2-\theta_1)|^{-2L}.
	\end{align*}

	Proof of \eqref{eq83821083202}.
	By calculating \eqref{eq849028091843201941} with $x=u$ and $y=v$,
	we have
	\begin{align*}
		\Xi_{uv}(\theta_1,\theta_2)
		&=
			\expect
				[
					2^{j(1-v_1(\theta_1))}
					\partial_y B_j( 2^j u_1(\theta_1), v_1(\theta_1) )\\
		&\phantom{=}\quad
					\times
					\{
						(-\log 2)j 2^{-jv_2(\theta_2)}
						B_j( 2^j u_2(\theta_2), v_2(\theta_2) )
						+
						2^{-jv_2(\theta_2)}
						\partial_v B_j( 2^j u_2(\theta_2), v_2(\theta_2) )
					\}
				]\\
		&=
			(-\log 2)
			j
			2^{j\{1-v_1(\theta_1)-v_2(\theta_2)\}}
			\expect
				[
					\partial_y B_j( 2^j u_1(\theta_1), v_1(\theta_1) )
					B_j( 2^j u_2(\theta_2), v_2(\theta_2) )
				]\\
		&\phantom{=}\qquad
			+
			2^{j\{1-v_1(\theta_1)-v_2(\theta_2)\}}
			\expect
				[
					\partial_y B_j( 2^j u_1(\theta_1), v_1(\theta_1) )
					\partial_v B_j( 2^j u_2(\theta_2), v_2(\theta_2) )
				].
	\end{align*}
	From \pref{prop32132183190}, we arrive at \eqref{eq83821083202} as follows:
	\begin{align*}
		|\Xi_{uv}(\theta_1,\theta_2)|
		&\leq
			c
			(j+1)
			2^{j\{1-v_1(\theta_1)-v_2(\theta_2)\}}
			|2^j u_1(\theta_1)- 2^j u_2(\theta_2)|^{-2L}\\
		&\leq
			c
			(j+1)
			2^{j\{1-2\lambda_\ast-2L\}}
			| u_2-u_1 + \epsilon (\theta_2-\theta_1)|^{-2L}.
	\end{align*}

	Proof of \eqref{eq83821083203}.
	By calculating \eqref{eq849028091843201941} with $x=v$ and $y=u$,
	we have
	\begin{align*}
		\Xi_{vu}(\theta_1,\theta_2)
		&=
			\expect
				[
					\{
						(-\log 2)j 2^{-jv_1(\theta_1)}
						B_j( 2^j u_1(\theta_1), v_1(\theta_1) )
						+
						2^{-jv_1(\theta_1)}
						\partial_v B_j( 2^j u_1(\theta_1), v_1(\theta_1) )
					\}\\
		&\phantom{=}\qquad\qquad\qquad\qquad
					\times
					2^{j(1-v_2(\theta_2))}
					\partial_y B_j( 2^j u_2(\theta_2), v_2 (\theta_2) )
				]\\
		&=
			(-\log 2)
			j
			2^{j\{1-v_1(\theta_1)-v_2(\theta_2)\}}
			\expect
				[
					B_j( 2^j u_1(\theta_1), v_1(\theta_1) )
					\partial_y B_j( 2^j u_2(\theta_2), v_2 (\theta_2) )
				]\\
		&\phantom{=}\qquad
			+
			2^{j\{1-v_1(\theta_1)-v_2(\theta_2)\}}
			\expect
				[
					\partial_v B_j( 2^j u_1(\theta_1), v_1(\theta_1) )
					\partial_y B_j( 2^j u_2(\theta_2), v_2 (\theta_2) )
				].
	\end{align*}
	From \pref{prop32132183190}, we arrive at \eqref{eq83821083203} as follows:
	\begin{align*}
		|\Xi_{vu}(\theta_1,\theta_2)|
		&\leq
			c
			(j+1)
			2^{j\{1-v_1(\theta_1)-v_2(\theta_2)\}}
			|2^j u_1(\theta_1)- 2^j u_2(\theta_2)|^{-2L}\\
		&\leq
			c
			(j+1)
			2^{j\{1-2\lambda_\ast-2L\}}
			| u_2-u_1 + \epsilon (\theta_2-\theta_1)|^{-2L}.
	\end{align*}

	Proof of \eqref{eq83821083204}.
	By calculating \eqref{eq849028091843201941} with $x=y=v$,
	we have
	\begin{align*}
		\Xi_{vv}(\theta_1,\theta_2)
		&=
			\expect
				[
					\{
						(-\log 2)j 2^{-jv_1(\theta_1)}
						B_j( 2^j u_1(\theta_1), v_1(\theta_1) )
						+
						2^{-jv_1(\theta_1)}
						\partial_v B_j( 2^j u_1(\theta_1), v_1(\theta_1) )
					\}\\
		&\phantom{=}\quad
					\times
					\{
						(-\log 2)j 2^{-jv_2(\theta_2)}
						B_j( 2^j u_2(\theta_2), v_2(\theta_2) )
						+
						2^{-jv_2(\theta_2)}
						\partial_v B_j( 2^j u_2(\theta_2), v_2(\theta_2) )
					\}
				]\\
		&=
			2^{-j\{v_1(\theta_1)+v_2(\theta_2)\}}
			\expect
				[
					\{
						(-\log 2)j 
						B_j( 2^j u_1(\theta_1), v_1(\theta_1) )
						+
						\partial_v B_j( 2^j u_1(\theta_1), v_1(\theta_1) )
					\}\\
		&\phantom{=}\quad
					\times
					\{
						(-\log 2)j 
						B_j( 2^j u_2(\theta_2), v_2(\theta_2) )
						+
						\partial_v B_j( 2^j u_2(\theta_2), v_2(\theta_2) )
					\}
				].
	\end{align*}
	From \pref{prop32132183190}, we arrive at \eqref{eq83821083204} as follows:
	\begin{align*}
		|\Xi_{vv}(\theta_1,\theta_2)|
		&\leq
			c
			2^{-j\{v_1(\theta_1)+v_2(\theta_2)\}}
			(j+1)^2
			|2^j u_1(\theta_1)- 2^j u_2(\theta_2)|^{-2L}\\
		&\leq
			c
			2^{-j\{v_1(\theta_1)+v_2(\theta_2)\}}
			2^{-2 j L}
			(j+1)^2
			| u_2-u_1 + \epsilon (\theta_2-\theta_1)|^{-2L}\\
		&\leq
			c
			(j+1)^2
			2^{-2j(\lambda_\ast+L)}
			| u_2-u_1 + \epsilon (\theta_2-\theta_1)|^{-2L}.
	\end{align*}
	Hence we see \eqref{eq83821083204}.
\end{proof}
\subsection{Proof of Lemma~\ref{lemma8453908409128}}
We are in a position to show \lref{lemma8453908409128}.
\begin{proof}[Proof of \lref{lemma8453908409128}]
	We will show the following:
	\begin{gather}
		\label{eq493820849}
		\expect[|B^\hf_+(t_2)-B^\hf_+(t_1)|^2]
		\leq
			c(t_2-t_1)^{2\lambda}
		\qquad
		\text{for all $t_1,t_2\in I$ with $t_1<t_2$,}\\
		\label{eq490294201}
		|
			\expect
				[
					\{B^\hf_+(t_1+\epsilon)-B^\hf_+(t_1)\}
					\{B^\hf_+(t_2+\epsilon)-B^\hf_+(t_2)\}
				]
		|
		\leq
			c
			(t_2-t_1)^{2\lambda-2}
			\epsilon^2\\
		\notag
		\qquad\qquad\qquad\qquad
		\text{
			for all $t_1,t_2\in I$ and $\epsilon>0$
			with $\epsilon<t_2-t_1$ and $t_1+\epsilon,t_2+\epsilon\in I$.
		}
	\end{gather}
	Here $c$ is a positive constant independent of $t_1$, $t_2$, and $\epsilon$.
	Throughout this proof, we take $K=h(I)$.
	Since $h$ is continuous, $K$ is a compact interval contained in $(0,1)$
	and $\min K=\lambda_*$.

	We show \eqref{eq493820849}.
	Recall $B^\hf_+(t_i)=X^\hf_+(t_i,h(t_i))$ ($i=1,2$).
	We divide $B^\hf_+(t_2)-B^\hf_+(t_1)$ into
	two terms $X^\hf_+(t_2,h(t_2))-X^\hf_+(t_1,h(t_2))$ and $X^\hf_+(t_1,h(t_2))-X^\hf_+(t_1,h(t_1))$
	and give estimates of them. 
	More precisely, for all $t_1,t_2,t\in I$ with $t_1<t_2$, we show
	\begin{align}
		\label{eq493029243242042}
		\expect[\{X^\hf_+(t_2,h(t))-X^\hf_+(t_1,h(t))\}^2]
		&\leq
			c|t_2-t_1|^{2\lambda},\\
		\label{eq121310093213213}
		\expect[\{X^\hf_+(t,h(t_2))-X^\hf_+(t,h(t_1))\}^2]
		&\leq
			c|t_2-t_1|^2.
	\end{align}

	We show \eqref{eq493029243242042}.
	From \eqref{eq493209201} and \pref{prop9842309340},
	we have
	\begin{align*}
		\expect[|X^\hf_+(t_2,h(t))-X^\hf_+(t_1,h(t))|^2]
		&=
			\sum_{j\geq 1}
				\expect[\{\tilde{A}_j(t_2,h(t))-\tilde{A}_j(t_1,h(t))\}^2]\\
		&\leq
			c
			\sum_{j\geq 1}
				2^{-2j(h(t)-\lambda)}
				|t_2-t_1|^{2\lambda}.
	\end{align*}
	Noting
	$2^{-2j(h(t)-\lambda)}\leq 2^{-2j(\lambda_\ast-\lambda)}$ for $j\geq 1$, 
	we arrive at \eqref{eq493029243242042}.

	We show \eqref{eq121310093213213}.
	From \eqref{eq493209201} and \pref{prop438214234143}, we have
	\begin{align*}
		\expect[|X^\hf_+(t,h(t_2))-X^\hf_+(t,h(t_1))|^2]
		&=
			\sum_{j\geq 1}
				\expect[\{\tilde{A}_j(t,h(t_2))-\tilde{A}_j(t,h(t_1))\}^2]\\
		&\leq
			c
			\sum_{j\geq 1}
				(j+1)^2
				2^{-2j\lambda_\ast}
				|h(t_2)-h(t_1)|^2.
	\end{align*}
	Since $h$ is $C^1$, we have the conclusion.

	We show \eqref{eq490294201}.
	From \pref{prop49320431941}, we have
	\begin{multline*}
		|
			\expect
				[
					\{B^\hf_+(t_1+\epsilon)-B^\hf_+(t_1)\}
					\{B^\hf_+(t_2+\epsilon)-B^\hf_+(t_2)\}
				]
		|\\
		\begin{aligned}
			&=
				|
					\expect
						[
							\{X^\hf_+(t_1+\epsilon,h(t_1+\epsilon))-X^\hf_+(t_1,h(t_1))\}
							\{X^\hf_+(t_2+\epsilon,h(t_2+\epsilon))-X^\hf_+(t_2,h(t_2))\}
						]
				|\\
			&=
				\left|
					\sum_{j\geq 1}
						\expect
							[
								\{\tilde{A}_j(t_1+\epsilon,h(t_1+\epsilon))-\tilde{A}_j(t_1,h(t_1))\}
								\{\tilde{A}_j(t_2+\epsilon,h(t_2+\epsilon))-\tilde{A}_j(t_2,h(t_2))\}
							]
				\right|\\
			&\leq
				\sum_{j\geq 1}
					c
					2^{-\kappa j}
					(t_2-t_1)^{2\lambda-2}
					\big\{
						|h(t_1+\epsilon)-h(t_1)|+\epsilon
					\big\}
					\big\{
						|h(t_2+\epsilon)-h(t_2)|+\epsilon
					\big\}\\
			&\leq
				c
				(t_2-t_1)^{2\lambda-2}
				\epsilon^2.
		\end{aligned}
	\end{multline*}
	This completes the proof.
\end{proof}

\appendix
\section{Technical lemmas}\label{SecAppendix}
In this appendix, we show lemmas used in Section~\ref{SecAnalysisOfGenerator}.
We denote by $\zeta$ the zeta function.
\begin{lemma}\label{lemma849208094238902}
	Let $\lambda>1$. 
	For all $x\in\RealNum$, we have
	\begin{align}
		\label{eq43i2oi4143}
		\sum_{k\in\Integers}
			(2+|x-k|)^{-\lambda}
		\leq
			2\zeta(\lambda).
	\end{align}
\end{lemma}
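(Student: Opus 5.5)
The plan is to compare the lattice sum with an integral. The function $f(y)=(2+|y|)^{-\lambda}$ is even and nonincreasing in $|y|$, so I would use the elementary fact that for a monotone function each summand is dominated by the integral over an adjacent unit interval.

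First, by periodicity in $x$ with respect to integer shifts (replace $x$ by $x-\floor{x}$), it suffices to treat $0\le x<1$.

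With this normalization, I would split $\Integers$ into $k\le 0$ and $k\ge 1$.
\begin{itemize}
\item For $k\le 0$ we have $|x-k|=x-k\ge -k=|k|$.
\item For $k\ge 1$ we have $|x-k|=k-x\ge k-1$.
\end{itemize}
Hence the two pieces are bounded by
\begin{align*}
	\sum_{k\le 0}(2+|x-k|)^{-\lambda}
	&\le
		\sum_{m\ge 0}(2+m)^{-\lambda},
	&
	\sum_{k\ge 1}(2+|x-k|)^{-\lambda}
	&\le
		\sum_{m\ge 0}(2+m)^{-\lambda}.
\end{align*}
Each of these sums equals $\sum_{n\ge 2}n^{-\lambda}=\zeta(\lambda)-1\le\zeta(\lambda)$.

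Adding the two pieces gives the bound $2\zeta(\lambda)-2\le 2\zeta(\lambda)$, which is \eqref{eq43i2oi4143}. Convergence of $\zeta(\lambda)$ uses exactly $\lambda>1$.

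There is no real obstacle here. The only care needed is the index shift for $k\ge1$, so that no term is counted with a worse lower bound on $|x-k|$ than claimed.
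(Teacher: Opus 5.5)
Your proof is correct and follows the paper's argument essentially verbatim: reduce to $0\le x<1$ by $1$-periodicity, split into $k\le 0$ and $k\ge 1$ using $|x-k|\ge -k$ and $|x-k|\ge k-1$, and bound each half by $\sum_{n\ge 2}n^{-\lambda}\le\zeta(\lambda)$. The integral comparison announced in your first sentence is never actually used, so that remark can simply be dropped.
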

\begin{proof}
	We denote by $f(x)$ the sum on the left-hand side of \eqref{eq43i2oi4143}.
	Since $f$ is $1$-periodic, it suffices to consider the case $0\leq x<1$.
	Under this assumption, we consider the following cases: $k\leq 0$ and $k\geq 1$.
	Let $k\leq 0$. Then, we have
	$|x-k|=x-k\geq -k$, which implies
	\begin{align*}
		\sum_{k\leq 0}
			(2+|x-k|)^{-\lambda}
		\leq
			\sum_{k\leq 0}
			(2+(-k))^{-\lambda}
		=
			\sum_{k\geq 2}
				k^{-\lambda}
		\leq
			\zeta(\lambda).
	\end{align*}
	Let $k\geq 1$.
	Then, we have
	$|x-k|=k-x\geq k-1$, which implies
	\begin{align*}
		\sum_{k\geq 1}
			(2+|x-k|)^{-\lambda}
		\leq
			\sum_{k\geq 1}
				(k+1)^{-\lambda}
		=
			\sum_{k\geq 2}
				k^{-\lambda}
		\leq
			\zeta(\lambda).
	\end{align*}
	From the above, we have $f(x)\leq 2\zeta(\lambda)$.
	This completes the proof.
\end{proof}

\begin{lemma}\label{lem483901134}
	Let $\kappa,\lambda>0$. Assume that $\kappa+\lambda>1$ holds. 
	For all distinct $x,y\in\RealNum$, we have
	\begin{align}
		\label{eq4832908019}
		\sum_{k\in\Integers}
			(2+|x-k|)^{-\kappa-\lambda}
			(2+|y-k|)^{-\kappa-\lambda}
		\leq
			2(1+2^\lambda)\zeta(\kappa+\lambda)
			|y-x|^{-\lambda}.
	\end{align}
\end{lemma}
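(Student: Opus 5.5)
We may assume $x<y$, since the left-hand side of \eqref{eq4832908019} is symmetric in $x$ and $y$. Put $D=y-x>0$, $a=\kappa+\lambda>1$ and $m=(x+y)/2$. The plan is to split $\Integers$ into four ranges,
\begin{align*}
	k\le x,\qquad x<k\le m,\qquad m<k<y,\qquad k\ge y.
\end{align*}
On each range we bound one of the two factors by a power of $D$ and sum the other factor only over that range. The key point is that a \emph{one-sided} sum costs only $\zeta(a)$: for instance,
\begin{align*}
	\sum_{k\le x}(2+x-k)^{-a}\le\sum_{n\ge 0}(2+n)^{-a}\le\zeta(a),
\end{align*}
because the values $x-k$ for $k\le x$ are distinct and form an arithmetic progression of step $1$ starting in $[0,1)$. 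This is the same computation as in the proof of \lref{lemma849208094238902}, but keeping only one half. Using one-sided sums rather than the two-sided bound $2\zeta(a)$ is what should produce the constant $2(1+2^\lambda)$ instead of $2^{\lambda+2}$.

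\emph{Outer ranges.} For $k\le x$ we have $|y-k|=y-k\ge D$. Since $2+|y-k|\ge 1$ and $a\ge\lambda$, this gives
\begin{align*}
	(2+|y-k|)^{-a}\le(2+|y-k|)^{-\lambda}\le D^{-\lambda}.
\end{align*}
Hence this range contributes at most $D^{-\lambda}\sum_{k\le x}(2+x-k)^{-a}\le\zeta(a)D^{-\lambda}$. Symmetrically, for $k\ge y$ we have $|x-k|\ge D$, so we bound $(2+|x-k|)^{-a}\le D^{-\lambda}$ and keep the one-sided sum $\sum_{k\ge y}(2+k-y)^{-a}\le\zeta(a)$. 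Together the outer ranges give $2\zeta(a)D^{-\lambda}$.

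\emph{Middle ranges.} For $x<k\le m$ we have $|y-k|\ge D/2$, so $(2+|y-k|)^{-a}\le(D/2)^{-\lambda}=2^\lambda D^{-\lambda}$. The remaining one-sided sum $\sum_{x<k\le m}(2+k-x)^{-a}$ is at most $\zeta(a)$, since $k-x$ runs over distinct points of an arithmetic progression of step $1$ lying in $(0,\infty)$. For $m<k<y$ we have $|x-k|>D/2$; we bound the $x$-factor by $2^\lambda D^{-\lambda}$ and sum $\sum_{m<k<y}(2+y-k)^{-a}\le\zeta(a)$. The middle ranges therefore give $2\cdot 2^\lambda\zeta(a)D^{-\lambda}$.

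Adding the two estimates gives $(2+2^{\lambda+1})\zeta(a)|y-x|^{-\lambda}=2(1+2^\lambda)\zeta(\kappa+\lambda)|y-x|^{-\lambda}$, which is exactly the claimed bound. The only delicate step is the bookkeeping of the one-sided sums, namely checking that each of the four ranges is bounded by $\sum_{n\ge0}(2+n)^{-a}\le\zeta(a)$ even though $x$ and $y$ are not integers. The argument is the shift-by-the-fractional-part reasoning from \lref{lemma849208094238902}: on each range the relevant distances $|x-k|$ or $|y-k|$ are distinct, differ by integers, and are all $\ge0$, so after listing them in increasing order the $n$-th one is at least $n$.
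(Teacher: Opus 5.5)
Your proof is correct and follows the paper's argument: you split $\Integers$ into the same four ranges, bound one factor by $|y-x|^{-\lambda}$ or $2^{\lambda}|y-x|^{-\lambda}$, and bound the remaining one-sided sum by $\zeta(\kappa+\lambda)$. The only difference is cosmetic. The paper first translates to $0\le x<1$ and cuts at the integers $0$, $\lfloor (y-x)/2\rfloor$, $\lfloor y\rfloor$, whereas you cut directly at $x$, $(x+y)/2$, $y$, which avoids the floor-function bookkeeping.
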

\begin{proof}
	We denote by $f(x,y)$ the sum on the left-hand side of \eqref{eq4832908019}.
	Since $f(x,y)=f(y,x)$ and $f(x,y)=f(x+p,y+p)$ for all $p\in\Integers$,
	it suffices to consider the case $0\leq x<1$ and $x<y$.

	Let $k\leq 0$. Then we have
	\begin{align*}
		|x-k|&=x-k\geq -k, & 
		|y-k|&=y-k=(y-x)+(x-k)\geq y-x,
	\end{align*}
	which implies
	\begin{align*}
		\sum_{k\leq 0}
			(2+|x-k|)^{-\kappa-\lambda}
			(2+|y-k|)^{-\kappa-\lambda}
		&\leq
			\sum_{k\leq 0}
				(2+(-k))^{-\kappa-\lambda}
				(2+y-x)^{-\kappa-\lambda}\\
		&\leq
			\zeta(\kappa+\lambda)
			(y-x)^{-\lambda}.
	\end{align*}

	Let $1\leq k\leq \left\lfloor \frac{y-x}{2} \right\rfloor$. Then we have
	\begin{align*}
		|x-k|&=(k-1)+(1-x)\geq k-1, \\
		|y-k|
		&=
			y-k
		=
			\frac{y-x}{2}
			+
			\left(
				\frac{y-x}{2}
				-
				\left\lfloor \frac{y-x}{2} \right\rfloor
			\right)
			+
			\left(
				\left\lfloor \frac{y-x}{2} \right\rfloor
				-
				k
			\right)
			+
			x
		\geq
			\frac{y-x}{2},
	\end{align*}
	which implies
	\begin{align*}
		\sum_{k=1}^{\left\lfloor \frac{y-x}{2} \right\rfloor}
			(2+|x-k|)^{-\kappa-\lambda}
			(2+|y-k|)^{-\kappa-\lambda}
		&\leq
			\sum_{k=1}^{\left\lfloor \frac{y-x}{2} \right\rfloor}
				(k+1)^{-\kappa-\lambda}
				\left(
					2+\frac{y-x}{2}
				\right)^{-\kappa-\lambda}\\
		&\leq
			\zeta(\kappa+\lambda)
			2^\lambda
			(y-x)^{-\lambda}.
	\end{align*}

	Let $\left\lfloor \frac{y-x}{2} \right\rfloor+1\leq k\leq \floor{y}$. Then we have
	\begin{align*}
		|x-k|
		&=
			k-x\\
		&=
			\left\{
				k
				-
				\left(
					\left\lfloor \frac{y-x}{2} \right\rfloor
					+1
				\right)
			\right\}
			+
			\left\{
				\left(
					\left\lfloor \frac{y-x}{2} \right\rfloor
					+1
				\right)
				-
				\frac{y-x}{2}
			\right\}
			+
			\frac{y-x}{2}
			-
			x \\
		&\geq
			\frac{y-x}{2}
			-
			1, \\
		|y-k|
		&=
			y-k
		=
			(y-\floor{y})
			+
			(\floor{y}-k)
		\geq
			\floor{y}-k,
	\end{align*}
	which implies
	\begin{multline*}
		\sum_{k=\left\lfloor \frac{y-x}{2} \right\rfloor+1}^{\floor{y}}
			(2+|x-k|)^{-\kappa-\lambda}
			(2+|y-k|)^{-\kappa-\lambda}\\
		\leq
			\sum_{k=\left\lfloor \frac{y-x}{2} \right\rfloor+1}^{\floor{y}}
				\left(
					\frac{y-x}{2}
					+
					1
				\right)^{-\kappa-\lambda}
				\left(
					2+\floor{y}-k
				\right)^{-\kappa-\lambda}
		\leq
			\zeta(\kappa+\lambda)
			2^\lambda
			(y-x)^{-\lambda}.
	\end{multline*}

	Let $\floor{y}+1\leq k$.
	Then we have
	\begin{align*}
		|x-k|&=k-x\geq y-x, \\
		|y-k|
		&=
			k-y
		=
			k-(\floor{y}+1)+(1+\floor{y}-y)
		\geq
			k-\floor{y}-1,
	\end{align*}
	which implies
	\begin{align*}
		\sum_{\floor{y}+1\leq k}
			(2+|x-k|)^{-\kappa-\lambda}
			(2+|y-k|)^{-\kappa-\lambda}
		&\leq
			\sum_{\floor{y}+1\leq k}
				(2+y-x)^{-\kappa-\lambda}
				(k-\floor{y}+1)^{-\kappa-\lambda}\\
		&\leq
			\zeta(\kappa+\lambda)
			(y-x)^{-\lambda}.
	\end{align*}
	From the above, we arrive at the assertion.
\end{proof}


\begin{thebibliography}{10}

\bibitem{Ayache2019Book}
Antoine Ayache.
\newblock {\em Multifractional stochastic fields}.
\newblock World Scientific Publishing Co. Pte. Ltd., Hackensack, NJ, 2019.

\bibitem{BenassiJaffardRoux1997}
Albert Benassi, St\'ephane Jaffard, and Daniel Roux.
\newblock Elliptic {G}aussian random processes.
\newblock {\em Rev. Mat. Iberoamericana}, 13(1):19--90, 1997.

\bibitem{BiaginiHuOksendalZhang2008Book}
Francesca Biagini, Yaozhong Hu, Bernt {\O}ksendal, and Tusheng Zhang.
\newblock {\em Stochastic calculus for fractional {B}rownian motion and applications}.
\newblock Probability and its Applications (New York). Springer-Verlag London, Ltd., London, 2008.

\bibitem{BianchiPantanellaPianese2012}
Sergio Bianchi, Alexandre Pantanella, and Augusto Pianese.
\newblock Modeling and simulation of currency exchange rate using multifractional process with random exponent.
\newblock {\em International Journal of Modeling and Optimization}, 2(3):309--314, 2012.

\bibitem{BianchiPantanellaPianese2013}
Sergio Bianchi, Alexandre Pantanella, and Augusto Pianese.
\newblock Modeling stock prices by multifractional {B}rownian motion: an improved estimation of the pointwise regularity.
\newblock {\em Quant. Finance}, 13(8):1317--1330, 2013.

\bibitem{BianchiPianese2014}
Sergio Bianchi and Augusto Pianese.
\newblock Multifractional processes in finance.
\newblock {\em Risk and Decision Analysis}, 5(1):1--22, 2014.

\bibitem{CoutinQian2002}
Laure Coutin and Zhongmin Qian.
\newblock Stochastic analysis, rough path analysis and fractional {B}rownian motions.
\newblock {\em Probab. Theory Related Fields}, 122(1):108--140, 2002.

\bibitem{FrizGessGulisashviliRiedel2016}
Peter~K. Friz, Benjamin Gess, Archil Gulisashvili, and Sebastian Riedel.
\newblock The {J}ain-{M}onrad criterion for rough paths and applications to random {F}ourier series and non-{M}arkovian {H}\"ormander theory.
\newblock {\em Ann. Probab.}, 44(1):684--738, 2016.

\bibitem{FrizHairer2020Book}
Peter~K. Friz and Martin Hairer.
\newblock {\em A course on rough paths}.
\newblock Universitext. Springer, Cham, Second edition, 2020.

\bibitem{FrizVictoir2010b}
Peter~K. Friz and Nicolas~B. Victoir.
\newblock Differential equations driven by {G}aussian signals.
\newblock {\em Ann. Inst. Henri Poincar{\'e} Probab. Stat.}, 46(2):369--413, 2010.

\bibitem{FrizVictoir2010Book}
Peter~K. Friz and Nicolas~B. Victoir.
\newblock {\em Multidimensional stochastic processes as rough paths}, volume 120 of {\em Cambridge Studies in Advanced Mathematics}.
\newblock Cambridge University Press, Cambridge, 2010.

\bibitem{Harang2020}
Fabian~A. Harang.
\newblock Differential equations driven by variable order {H}{\"o}lder noise and the regularizing effect of delay.
\newblock {\em Stochastics}, 92(1):90--123, 2020.

\bibitem{InahamaNaganuma2021}
Yuzuru Inahama and Nobuaki Naganuma.
\newblock Asymptotic expansion of the density for hypoelliptic rough differential equation.
\newblock {\em Nagoya Math. J.}, 243:11--41, 2021.

\bibitem{Lyons1998}
Terry~J. Lyons.
\newblock {Differential equations driven by rough signals}.
\newblock {\em Rev. Mat. Iberoamericana}, 14(2):215--310, 1998.

\bibitem{LyonsQian2002Book}
Terry~J. Lyons and Zhongmin Qian.
\newblock {\em System control and rough paths}.
\newblock Oxford Mathematical Monographs. Oxford University Press, Oxford, 2002.

\bibitem{Mishura2008Book}
Yuliya~S. Mishura.
\newblock {\em Stochastic calculus for fractional {B}rownian motion and related processes}, volume 1929 of {\em Lecture Notes in Mathematics}.
\newblock Springer-Verlag, Berlin, 2008.

\bibitem{Nourdin2012Book}
Ivan Nourdin.
\newblock {\em Selected aspects of fractional {B}rownian motion}, volume~4 of {\em Bocconi \& Springer Series}.
\newblock Springer, Milan; Bocconi University Press, Milan, 2012.

\bibitem{PeltierLevyVehel1995}
Romain-François Peltier and Jacques L{\'e}vy~V{\'e}hel.
\newblock Multifractional {B}rownian motion: Definition and preliminary results.
\newblock 1995.

\bibitem{PipirasTaqqu2017Book}
Vladas Pipiras and Murad~S. Taqqu.
\newblock {\em Long-range dependence and self-similarity}, volume [45] of {\em Cambridge Series in Statistical and Probabilistic Mathematics}.
\newblock Cambridge University Press, Cambridge, 2017.

\bibitem{YoungLC1936}
Laurence~C. Young.
\newblock An inequality of the {H}\"{o}lder type, connected with {S}tieltjes integration.
\newblock {\em Acta Math.}, 67(1):251--282, 1936.

\end{thebibliography}
\end{document}